\documentclass[12pt]{amsart}
\usepackage{amssymb}
\usepackage[margin=1in]{geometry}

\title{Kirchberg's factorization property for locally compact groups}
\author{Matthew Wiersma}
\address{Mathematical and Statistical Sciences, University of Alberta, Edmonton, Alberta, T6G 2G1, Canada}
\email{mwiersma@uwalberta.ca}

\newtheorem{theorem}{Theorem}[section]
\newtheorem{corollary}[theorem]{Corollary}
\newtheorem{prop}[theorem]{Proposition}
\newtheorem{lemma}[theorem]{Lemma}

\theoremstyle{remark}
\newtheorem{remark}[theorem]{Remark}
\newtheorem{claim}{Claim}

\newenvironment{claimproof}[1]{\par\noindent\emph{Proof of Claim}.\space#1}{\hfill $\blacksquare$\vspace{0.5em}}

\theoremstyle{definition}

\newtheorem{example}[theorem]{Example}
\newtheorem{prob}[theorem]{Problem}

\newcommand{\fn}{\!:}
\newcommand{\C}{\mathbb C}
\newcommand{\R}{{\mathbb R}}

\newcommand{\Hi}{\mathcal{H}}
\newcommand{\lla}{\left\langle}
\newcommand{\rra}{\right\rangle}
\newcommand{\mc}{\mathcal}

\newcommand{\tn}{\textnormal}

\newcommand{\F}{\mathbb{F}}
\newcommand{\supp}{\mathrm{supp}}

\newcommand{\mr}{\mathrm}

\begin{document}

\begin{abstract}
	A locally compact group $G$ has the {\it factorization property} if the map
	$$\mr C^*(G)\odot \mr C^*(G)\ni a\otimes b\mapsto \lambda(a)\rho(b)\in\mc B(\mr L^2(G))$$
	is continuous with respect to the minimal C*-norm. This paper seeks to initiate a rigorous study of this property in the case of locally compact groups which, in contrast to the discrete case, has been relatively untouched.
	A partial solution to the question of when the factorization property passes to continuous embeddings is given -- a question which traces back to Kirchberg's seminal work on the topic and is known to be false in general. It is also shown that every ``residually amenably embeddable'' group must necessarily have the factorization property and that an analogue of Kirchberg's characterization of the factorization property for discrete groups with property (T) holds for a more general class of groups.
\end{abstract}

\maketitle

\section{Introduction}

The history of Kirchberg's factorization property dates back to 1976 when Wasserman gave the first example of a non-exact C*-algebra by showing that the sequence
$$0\to \mr C^*(\F_2)\otimes_{\min}J\to \mr C^*(\F_2)\otimes_{\min}\mr C^*(\F_2)\to \mr C^*(\F_2)\otimes_{\min} \mr C^*_r(\F_2)\to 0$$
is not exact, where $J$ denotes the kernel of the canonical map $\mr C^*(\F_2)\to \mr C^*_r(\F_2)$ (see \cite{wasserman}). The key step in this proof was showing that the positive definite function $1_\Delta$ (the characteristic function of the diagonal subgroup $\Delta=\Delta_{\F_2}$ of $\F_2\times\F_2$) on $\F_2\times\F_2$ extends to a positive linear functional on $\mr C^*(\F_2)\otimes_{\min}\mr C^*(\F_2)$. Inspired by this technique, Kirchberg defined a locally compact group $G$ to have the {\it factorization property} (or {\it property (F)}) if the representation of $\lambda\cdot\rho\fn G\times G\to \mc B(\mr L^2(G))$ defined by $(\lambda\cdot \rho)(s,t)=\lambda(s)\rho(t)$ extends to a $*$-representation of $\mr C^*(G)\otimes_{\min}\mr C^*(G)$ (see \cite{kirch1}), i.e., if the map $\mr C^*(G)\odot \mr C^*(G)\to \mc B(\mr L^2(G))$ defined by $a\otimes b\mapsto\lambda(a)\rho(b)$ is continuous with respect to the minimal C*-norm. Equivalently, $G$ has the factorization property if and only if $\lambda\cdot \rho$ is weakly contained in $\pi_u\times\pi_u$, where $\pi_u$ denotes the universal representation of $G$. Note that if $G$ is a discrete group, then $G$ has the factorization property if and only if the positive definite function $1_\Delta$ on $G\times G$ extends to a positive linear functional on $\mr C^*(G)\otimes_{\min}\mr C^*(G)$ since the GNS representation of $1_\Delta$ is $\lambda\cdot \rho$. Wasserman's proof then applies to show that a discrete group $G$ with the factorization property is amenable if and only if the sequence
\begin{equation}\label{Eqn:intro}
0\to \mr C^*(G)\otimes_{\min}J\to \mr C^*(G)\otimes_{\min}\mr C^*(G)\to \mr C^*(G)\otimes_{\min} \mr C^*_r(G)\to 0
\end{equation}
is exact, where $J$ is the kernel of the canonical map $\mr C^*(G)\to \mr C^*_r(G)$.

Since Kirchberg's seminal paper on the subject, discrete groups with the factorization property have been studied in connection to a variety of different topics. For instance, finitely generated discrete groups with the factorization property are of interest from the perspective of geometric group theory because they lie strictly between the classes of finitely generated residually amenable discrete groups and finitely generated hyperlinear groups (see \cite{thom}). From the operator algebras perspective, groups with the factorization property are studied in connection to various properties of C*-algebras and their connection to Kirchberg's reformulation of the Connes Embedding Conjecture (see \cite{brown} and \cite{ozawa}).

In contrast to the discrete case, the study of the factorization property for locally compact groups remains relatively untouched. There two probable reasons for this disparity. The first is that the analogues of important results for discrete groups with the factorization property fail in the locally compact case by virtue of the fact that $\mr C^*(G)$ may be nuclear for nonamenable locally compact groups $G$. For example, sequence \ref{Eqn:intro} will be exact whenever $G$ is a locally compact group such that $\mr C^*(G)$ is nuclear. The second reason is that proofs of results about the factorization property tend to rely on the fact that a discrete group $G$ has the factorization property if and only if $1_\Delta$ extends to a positive linear functional on $\mr C^*(G)\otimes_{\min}\mr C^*(G)$. This paper seeks to initiate a rigorous study of the factorization property in the context of locally compact groups. The first of the two issues mentioned above is addressed by considering particular classes of locally compact groups (namely the classes of QSIN and inner amenable groups) which are large enough to be interesting, but well enough behaved that proper analogues of results from the discrete case hold. We also show that it is often possible to get around the second issue, but proofs tend to become substantially more difficult.

This paper is structured as follows. After a brief section addressing notation and background material, Section \ref{Sec:basic} relates the factorization property with properties of group C*-algebras, and considers some stronger properties than the factorization property. Section \ref{Sec:subgroup} provides a partial answer to when the factorization property passes to continuous embeddings -- a question which traces back to Kirchberg's seminal work on the topic and is known to be false in general. Section \ref{Sec:residual} studies residual properties of groups with the factorization property and, in particular, shows that ``residually amenably embeddable'' groups have the factorization property. Next, Section \ref{Sec:T} generalizes Kirchberg's characterization of the factorization property for discrete groups with property (T) (see \cite{kirch2}) to a more general class of groups. The paper is then concluded by posing two problems for future research.

\section{Notation and Background}

\subsection{Notation and conventions}

Given a Banach space $X$, we will let $\mc B(X)$ denote the space of bounded linear maps from $X$ to itself, and $X_1$ denote the set of elements in $X$ which have norm 1. In the case that we are dealing with a C*-algebra $A$, we will let $A_+$ denote the positive cone in $A$. All groups $G$ will be assumed to be locally compact unless otherwise stated. The left and right regular representations of a group $G$ are denoted by $\lambda$ and $\rho$, respectively. For $p\in [1,\infty]$, we let $\tau_p\fn G\to \mr B(\mr L^p(G))$ denote the isometric conjugation action given by $\tau_p(s)f(t)=f(s^{-1}ts)\Delta(s)^{1/p}$, where $\Delta$ denotes the modular function of $G$. All group representations in this paper will be assumed to be unitary and continuous in the strong operator topology. If $\pi\fn G\to \mc B(\Hi)$ is a representation of a locally compact group $G$ and $\xi,\eta\in\Hi$, we will let $\pi_{\xi,\eta}\fn G\to \C$ denote the coefficient function defined by $\pi_{\xi,\eta}(s)=\lla \pi(s)\xi,\eta\rra$.

Suppose that $G$ is a locally compact group. The left Haar measure of $G$ will be denoted by $m=m_G$ or simply by $dx$. If $N$ is a closed normal subgroup of $G$, then $\dot x$ will denote the coset $xN\in G/N$. We will always make the assumption that the left haar measures on $G$, $G/N$ and $N$ are normalized so that
$$ \int_G f(x)\,dx=\int_{G/N}\int_N f(xn)\,dn\,d\dot x $$
for every $f\in \mr C_c(G)$.

\subsection{Some classes of locally compact groups}

In this section we briefly introduce the notions of SIN groups, QSIN groups, and inner amenable groups since many operator algebraists may be unfamiliar with these classes of locally compact groups.

A locally compact group is a {\it small invariant neighbourhood group} or {\it SIN group} if the identity $e$ of $G$ admits a neighbourhood base of conjugation invariant compact sets. Examples of SIN groups include all abelian, compact, and discrete groups. A well known characterization due to Mosak states that a locally compact group $G$ is SIN if and only if $\mr L^1(G)$ admits a central bounded approximate identity, i.e., a bounded approximation identity $\{e_\alpha\}$ so that $\tau_1(s)e_\alpha=e_\alpha$ for every $s\in G$ and index $\alpha$ (see \cite{mosak}).

Offering a generalization by way of Mosak's characterization of SIN groups, a {\it quasi-SIN group} or {\it QSIN group} is a locally compact group $G$ for which $\mr L^1(G)$ admits a quasi-invariant bounded approximate identity, i.e., a bounded approximate identity $\{e_\alpha\}$ such that $\|\tau(s)e_\alpha-e_\alpha\|\to 0$ uniformly on compact subsets of $G$. The class of QSIN groups is much more general than that of SIN groups and, in particular, contains every amenable group by a result of Losert and Rindler (see \cite{lr}). Interested readers are encouraged to see \cite{stokke} for a nice treatment of the basic theory of QSIN groups.

Finally, a locally compact group $G$ is {\it inner amenable} if $\mr L^\infty(G)$ admits a conjugation invariant mean, i.e., a state $\mu\in \mr L^\infty(G)^*$ so that
$$\mu(s\cdot f)=\mu(f)$$
for every $s\in G$ and $f\in \mr L^\infty(G)$, where the action of $s$ on $\mr L^\infty(G)$ is given by $(s\cdot f)(t)=f(s^{-1}ts)$. Inner amenable groups are the most general of the three classes of locally compact groups introduced in this section and, in particular, every discrete group is inner amenable. There is another notion of inner amenability for discrete groups where not every discrete group is inner amenable, but we will not work with this notion.

\section{Basic results}\label{Sec:basic}

Kirchberg initiated the study of the factorization property due to the properties of the associated group C*-algebras in the discrete case. Though the analogues of these results do not hold for all locally compact groups, we show that they do for the class of inner amenable groups.

Recall that a representation $\pi\fn G\to \mc B(\Hi)$ of a locally compact group $G$ is {\it amenable} if there exists a state $\mu\in \mc B(\Hi)^*$ such that
$$ \mu(\pi(s)T\pi(s^{-1}))=\mu(T) $$
for every $s\in G$ and $T\in \mc B(\Hi)$. Key facts which will be used throughout this section are that a locally compact group $G$ is
\begin{itemize}
	\item amenable if and only if $\lambda$ is amenable,
	\item inner amenable if and only if $\tau_2$ is amenable.
\end{itemize}
All results on amenable representations used in this paper can be found in Bekka's original paper \cite{bekka}.

\begin{prop}\label{Prop:seq}
	Suppose that $G$ is an inner amenable group which admits the factorization property and let $J$ denote the kernel of the canonical map from $\mr C^*(G)$ onto $\mr C^*_r(G)$. Then the sequence
	\begin{equation}\label{seq}
		0\to \mr C^*(G)\otimes_{\min} J\to \mr C^*(G)\otimes_{\min} \mr C^*(G)\to \mr C^*(G)\otimes_{\min} \mr C^*_r(G)\to 0
	\end{equation}
	is exact if and only if $G$ is amenable.
\end{prop}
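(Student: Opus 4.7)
The plan is to handle the two implications separately. The ``if'' direction is immediate: when $G$ is amenable one has $\mr C^*(G)=\mr C^*_r(G)$, so $J=0$ and the sequence degenerates to the trivial exact one.

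For the forward direction, assume $G$ is inner amenable, has the factorization property, and that the displayed sequence is exact. Property (F) provides a $*$-representation $\pi\fn \mr C^*(G)\otimes_{\min}\mr C^*(G)\to \mc B(\mr L^2(G))$ sending $a\otimes b$ to $\lambda(a)\rho(b)$. Since $\rho$ is unitarily equivalent to $\lambda$ via the flip operator, $J=\ker(\mr C^*(G)\to \mr C^*_r(G))$ is also the kernel of $\rho$. Hence $\pi$ kills the algebraic tensor product $\mr C^*(G)\odot J$, and by continuity it vanishes on all of $\mr C^*(G)\otimes_{\min}J$. Exactness identifies this ideal with the kernel of the canonical quotient map onto $\mr C^*(G)\otimes_{\min}\mr C^*_r(G)$, so $\pi$ descends to a $*$-representation $\tilde\pi\fn \mr C^*(G)\otimes_{\min}\mr C^*_r(G)\to \mc B(\mr L^2(G))$.

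Translating back to group representations via the identification of $\mr C^*(G)\otimes_{\min}\mr C^*_r(G)$ with the C*-algebra generated by the external tensor product $\pi_u\times\lambda$ on $G\times G$, we conclude that $\lambda\cdot\rho \prec \pi_u\times\lambda$. Restricting to the diagonal copy of $G$ gives $\tau_2\prec \pi_u\otimes\lambda$ (here $\otimes$ denotes the internal/Kronecker tensor product). By Fell's absorption principle, $\pi_u\otimes\lambda$ is unitarily equivalent to a multiple of $\lambda$, hence weakly equivalent to $\lambda$, so $\tau_2\prec \lambda$. Inner amenability of $G$ is precisely amenability of $\tau_2$ as a representation, and amenability of representations passes up through weak containment: if $\sigma\prec\pi$ and $\sigma$ is amenable, then $1_G \prec \sigma\otimes\bar\sigma\prec \pi\otimes\bar\pi$, making $\pi$ amenable. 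Consequently $\lambda$ is amenable, and Bekka's theorem gives that $G$ is amenable.

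The main conceptual step is the passage from the existence of the descended $*$-representation $\tilde\pi$ to the weak containment $\lambda\cdot\rho \prec \pi_u\times\lambda$ of $G\times G$-representations; this rests on a standard identification of $\mr C^*(G)\otimes_{\min}\mr C^*_r(G)$ as the enveloping C*-algebra of $\pi_u\times\lambda$. Everything else is a direct application of Fell absorption and Bekka's theory of amenable representations, and no serious technical obstacle is expected.
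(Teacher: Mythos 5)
Your proof is correct and follows essentially the same route as the paper's: use exactness to descend $\lambda\cdot\rho$ to a $*$-representation of $\mr C^*(G)\otimes_{\min}\mr C^*_r(G)$, restrict to the diagonal subgroup, apply Fell's absorption principle to obtain $\tau_2\prec\lambda$, and conclude via Bekka's theory of amenable representations together with inner amenability. Your intermediate weak containment $\lambda\cdot\rho\prec\pi_u\times\lambda$ is in fact slightly more careful than the paper's stated $\lambda\cdot\rho\prec\lambda\times\lambda$, but the diagonal-restriction step works identically in either case.
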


\begin{proof}
It is clear that Sequence \ref{seq} is exact whenever $G$ is amenable by virtue of $\mr C^*(G)$ being nuclear. So we will assume that the above sequence is exact and deduce that $G$ is amenable.
Since $G$ has the factorization property, the representation $\lambda\cdot \rho$ extends to a $*$-representation $\mr C^*(G)\otimes_{\min} \mr C^*(G)$. Viewed as such, the kernel of $\lambda\cdot \rho$ contains $\mr C^*(G)\otimes_{\min} J$ and, thus, $\lambda\cdot\rho$ extends to a $*$-representation of $\mr C^*(G)\otimes_{\min} \mr C^*_r(G)$ since Sequence \ref{seq} is exact and, so,  $\lambda\cdot\rho$ is weakly contained in $\lambda\times\lambda$. In particular, this implies that $(\lambda\cdot \rho)|_\Delta$ is weakly contained in $(\lambda\times\lambda)|_\Delta$, where $\Delta$ denotes the diagonal subgroup of $G\times G$. Observe that $(\lambda\cdot\rho)|_\Delta=\tau_2$ and $(\lambda\times \lambda)|_\Delta=\lambda\otimes\lambda$ is unitarily equivalent to an amplification of $\lambda$ by Fell's absorption principle. So $\tau_2$ is weakly contained $\lambda$. Then $\lambda$ is an amenable representation since $\tau_2$ is an amenable representation and, hence, $G$ is an amenable group.
\end{proof}


This result shows for such groups $G$ that $\mr C^*(G)$ is exact if and only if $G$ is amenable. This result can be improved by appealing to the following two results of Effros and Haagerup.
The definitions of local reflexivity and the local lifting property (LLP) (which may be found in \cite{pisier}) are omitted from this paper due to their technical nature and since they do not play a role in the remainder of this paper. We do, however, mention that every locally reflexive C*-algebra is exact and the LLP for a C*-algebra $A$ is equivalent to the condition that $A\otimes_{\min}\mc B(\Hi)=A\otimes_{\max}\mc B(\Hi)$ canonically.

\begin{theorem}[Effros-Haagerup {\cite[Proposition 5.3]{effh}}]
	The sequence
	$$0 \to J \otimes_{\min} C \to A \otimes_{\min} C \to A/J \otimes_{\min}  C \to 0$$
	is exact for every locally reflexive C*-algebra $A$, closed two-sided ideal $J$ of $A$, and every C*-algebra $C$.
\end{theorem}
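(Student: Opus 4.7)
The plan is to establish the only non-trivial assertion, namely $\ker(q\otimes \id_C)\subseteq J\otimes_{\min} C$, where $q\fn A\to A/J$ is the quotient map. Surjectivity of $q\otimes \id_C$ is automatic: it is a $*$-homomorphism with dense range, so its range is a C*-subalgebra of $(A/J)\otimes_{\min}C$ that contains the dense algebraic tensor product. The reverse inclusion $J\otimes_{\min}C\subseteq \ker(q\otimes \id_C)$ is immediate.

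The first step is to reformulate the claim as a slice-map property. For every $\phi\in C^*$ the bounded map $R_\phi\fn A\otimes_{\min} C\to A$ extending $a\otimes c\mapsto \phi(c)a$ satisfies $q\circ R_\phi = R_\phi\circ (q\otimes \id_C)$, so any $x\in \ker(q\otimes \id_C)$ has $R_\phi(x)\in J$ for every $\phi\in C^*$. It therefore suffices to prove the Fubini-type identity $J\otimes_{\min}C=\{x\in A\otimes_{\min}C: R_\phi(x)\in J\ \tn{for every}\ \phi\in C^*\}$.

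Next I would pass to the bidual. Because $J^{**}$ is a weak-$*$ closed two-sided ideal of the von Neumann algebra $A^{**}$, there is a central projection $p\in A^{**}$ with $J^{**}=pA^{**}$, so $A^{**}$ splits as the direct sum of the von Neumann algebras $J^{**}$ and $(1-p)A^{**}\cong (A/J)^{**}$. This direct-sum decomposition makes the corresponding slice-map property automatic at the von Neumann level: for $y\in A^{**}\otimes_{\min}C$, the decomposition induces a splitting $y=py+(1-p)y$ inside $A^{**}\otimes_{\min}C$, and the hypothesis $R_\phi(y)\in J^{**}$ for all $\phi$ forces $(1-p)y=0$, so $y\in J^{**}\otimes_{\min}C$. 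Viewing the given $x\in A\otimes_{\min}C$ inside $A^{**}\otimes_{\min} C$, this places $x$ in the weak-$*$ closure of $J\otimes_{\min} C$ inside $A^{**}\otimes_{\min} C$.

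The crucial and most difficult step is to upgrade this weak-$*$ containment inside the bidual to a norm containment inside $A\otimes_{\min}C$, and this is exactly what local reflexivity is designed to accomplish. Local reflexivity provides, for every finite-dimensional operator subspace $E\subseteq A^{**}$, a net of complete contractions $T_\alpha\fn E\to A$ converging pointwise in the weak-$*$ topology to the inclusion $E\hookrightarrow A^{**}$. Choosing $E$ to contain enough of the slices of $x$ and applying the cbap-style maps $T_\alpha\otimes \id_C$, one produces approximants to $x$ that, by virtue of the slice hypothesis $R_\phi(x)\in J$, asymptotically land in $J\otimes_{\min}C$. I expect the main obstacle to be arranging the approximation so that this asymptotic landing in $J$ converts to a genuine norm approximation — a passage from weak-$*$ to norm convergence ultimately achieved by combining polar decomposition of the separating functionals with the complete boundedness guaranteed by local reflexivity.
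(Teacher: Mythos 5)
The paper offers no proof of this statement: it is imported verbatim from Effros--Haagerup \cite[Proposition 5.3]{effh} and used as a black box, so there is no internal argument to compare yours against; I can only assess your outline on its own terms. Your reductions are correct and are the standard ones. Exactness does come down to the single inclusion $\ker(q\otimes\id_C)\subseteq J\otimes_{\min}C$; the slice maps $R_\phi$ do place $\ker(q\otimes\id_C)$ inside the Fubini product; and the central projection $p\in A^{**}$ with $J^{\perp\perp}=pA^{**}$ does force any such $x$, viewed in $A^{**}\otimes_{\min}C$, into $(pA^{**})\otimes_{\min}C$. (A terminological quibble: $A^{**}\otimes_{\min}C$ is not a dual space, so ``weak-$*$ closure of $J\otimes_{\min}C$'' is not meaningful there; what you actually obtain, and what the endgame needs, is the quantitative statement that if $x_0=\sum_i a_i\otimes c_i$ approximates $x$ within $\epsilon$, then $\bigl\|x_0-\sum_i pa_i\otimes c_i\bigr\|_{A^{**}\otimes_{\min}C}<\epsilon$, since this norm equals $\|(q\otimes\id_C)(x_0)\|$ under the identification $(A/J)^{**}\cong(1-p)A^{**}$.)

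The genuine gap is the final step --- the only place local reflexivity enters, hence the entire content of the theorem --- and the mechanism you sketch for it cannot work as described. You propose to apply $T_\alpha\otimes\id_C$ to produce norm-approximants of $x$ lying in $J\otimes_{\min}C$; but the elements you would have to hit, namely $pa_i$, lie in $A^{**}\setminus A$ in general, and a net in $A$ converging to them only weak-$*$ can never converge in norm, since $A$ is norm closed; nor does weak-$*$ convergence of $T_\alpha(pa_i)$ to a point of $J^{\perp\perp}$ control $\mathrm{dist}(T_\alpha(pa_i),J)$. Two honest ways to close the argument exist, and neither is in your outline. One is to prove the nontrivial refinement that the maps $T_\alpha\colon E\to A$ may be chosen with $T_\alpha(E\cap J^{\perp\perp})\subseteq J$ (this uses a quasicentral approximate unit of $J$), and then pass to convex combinations, via Mazur's theorem applied in $A^{\dim(E\cap A)}$, to upgrade point-weak-$*$ to point-norm convergence on $E\cap A$ --- which is legitimate there precisely because those targets lie in $A$. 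The cleaner alternative is to dualize: by Hahn--Banach it suffices to show every $f\in(A\otimes_{\min}C)^*$ vanishing on $J\otimes_{\min}C$ kills $x$. With $E=\mathrm{span}\{a_i,pa_i\}$, the functionals $f\circ(T_\alpha\otimes\id_C)$ are uniformly bounded on $E\otimes_{\min}C$ and converge pointwise to the contraction $g(e\otimes c)=\langle e,f_c\rangle$, where $f_c=f(\,\cdot\otimes c)\in A^*$; since each $f_c$ annihilates $J$, its pairing with $A^{**}$ annihilates $J^{\perp\perp}$, so $g$ kills $\sum_i pa_i\otimes c_i$ and hence $|g(x_0)|<\epsilon$, while $g(x_0)=f(x_0)$ because the $a_i$ lie in $A$. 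Here weak-$*$ convergence is exactly the right tool because one pairs against functionals instead of taking norm limits. Your outline assembles the correct objects, but without one of these two completions the proof does not close, and the phrase ``polar decomposition of the separating functionals'' does not point at either of them.
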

\begin{theorem}[Effros-Haagerup {\cite[Theorem 3.2]{effh}}]
	Let $B$ be a C*-algebra and $J$ a closed two sided ideal of $B$. If $A:=B/J$ has the LLP, then the sequence
	$$0 \to  J \otimes _{\min} C \to B \otimes _{\min} C \to B/J \otimes _{\min} C \to 0$$
	is exact for every C*-algebra $C$.
\end{theorem}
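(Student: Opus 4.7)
The plan is to establish exactness at the middle term $B \otimes_{\min} C$, since the left map is injective by injectivity of the min tensor product and the right map is surjective by density. Writing $q: B \to A$ for the quotient, the kernel of $q \otimes \id_C$ clearly contains $J \otimes_{\min} C$, and the reverse containment is equivalent to showing that the canonical surjective $*$-homomorphism $(B \otimes_{\min} C)/(J \otimes_{\min} C) \to A \otimes_{\min} C$ is an isometry. Being a contraction between C*-algebras, the content reduces to producing, for each $x \in A \odot C$ and each $\epsilon > 0$, a lift $y \in B \odot C$ with $(q \otimes \id_C)(y) = x$ and $\|y\|_{B \otimes_{\min} C} \leq \|x\|_{A \otimes_{\min} C} + \epsilon$.

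For the construction, given $x = \sum_{i=1}^n a_i \otimes c_i \in A \odot C$, I would form the finite-dimensional operator system $E \subseteq A$ spanned by $\{1, a_i, a_i^*\}_i$ (after unitization if needed). Invoking the LLP of $A$, applied to the identity $*$-homomorphism $A \to A = B/J$, produces a ucp map $\phi: E \to B$ lifting the inclusion $E \hookrightarrow A$, i.e., $q \circ \phi$ equals the inclusion of $E$ into $A$. By functoriality of the min tensor product for operator systems, the induced map $\phi \otimes \id_C: E \otimes_{\min} C \to B \otimes_{\min} C$ is ucp and hence contractive. Setting $y := (\phi \otimes \id_C)(x)$ then yields a lift of $x$ with $\|y\|_{B \otimes_{\min} C} \leq \|x\|_{E \otimes_{\min} C} = \|x\|_{A \otimes_{\min} C}$, where the last equality uses injectivity of the min tensor product restricted to $E$.

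The main obstacle is choosing the correct characterization of LLP to invoke. The paper's stated characterization via $A \otimes_{\min} \mc B(\Hi) = A \otimes_{\max} \mc B(\Hi)$ does not directly furnish the lift $\phi$; instead I would appeal to its equivalent formulation (as found in Pisier's book) as local liftability of ucp maps into quotient C*-algebras. Once this is in hand, the argument is essentially immediate from contractivity of the tensored ucp map. An alternative approach via the $\mc B(\Hi)$-characterization would combine exactness of the max tensor product under quotients with the identity $A \otimes_{\max} \mc B(\Hi) = A \otimes_{\min} \mc B(\Hi)$, but transferring vanishing between min and max norms is delicate, so the local-lifting route appears cleaner.
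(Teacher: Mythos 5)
The paper does not prove this statement at all: it is quoted verbatim as Theorem 3.2 of Effros--Haagerup, so there is no internal proof to compare against. Your argument is, in substance, the standard (indeed essentially the original) proof of that theorem, and it is correct. Two points are worth making explicit. First, the reduction step: a norm-controlled lift $y\in B\odot C$ of $x$ bounds the quotient norm of $x$ in $(B\otimes_{\min}C)/(J\otimes_{\min}C)$ because any two algebraic lifts of $x$ in $B\odot C$ differ by an element of $J\odot C\subseteq J\otimes_{\min}C$ (exactness of the algebraic tensor product over $\C$); without this remark the inequality $\|x\|_{(B\otimes_{\min}C)/(J\otimes_{\min}C)}\leq\|y\|_{B\otimes_{\min}C}$ is not immediate, since a priori $\ker(q\otimes\mathrm{id}_C)$ could be larger than $J\otimes_{\min}C$. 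Second, the unitization issue you wave at does need a word when $B$ is non-unital: the ucp lift lands in $B^+$, and one checks that the images of the $a_i$ actually lie in $B$ (their images under $q^+$ lie in $A\subset A^+$), or else one compares the sequences for $B$ and $B^+$. You are also right that the paper's stated characterization of the LLP via $A\otimes_{\min}\mc B(\Hi)=A\otimes_{\max}\mc B(\Hi)$ is not the form one uses here; the local-liftability formulation is the correct input (and is in fact the definition used by Effros--Haagerup, the equivalence with the tensor characterization being a later theorem of Kirchberg), so invoking it is legitimate and your instinct to avoid routing through $\mc B(\Hi)$ is sound.
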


\begin{corollary}
	Let $G$ be an inner amenable group with the factorization property. The following are equivalent.
	\begin{enumerate}
		\item[(i)] $G$ is amenable,
		\item[(ii)] $\mr C^*(G)$ is locally reflexive,
		\item[(iii)] $\mr C^*_r(G)$ has the LLP.
	\end{enumerate}
\end{corollary}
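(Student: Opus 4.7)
The plan is to show $\mathrm{(i)}\Rightarrow\mathrm{(ii)},\mathrm{(iii)}$ by the standard properties of nuclear C*-algebras, and then to deduce $\mathrm{(ii)}\Rightarrow\mathrm{(i)}$ and $\mathrm{(iii)}\Rightarrow\mathrm{(i)}$ by reducing each to the hypothesis of Proposition \ref{Prop:seq} via the two Effros--Haagerup theorems quoted above.

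For $\mathrm{(i)}\Rightarrow\mathrm{(ii)}$ and $\mathrm{(i)}\Rightarrow\mathrm{(iii)}$, I would simply note that if $G$ is amenable then $\mathrm{C}^*(G)=\mathrm{C}^*_r(G)$ is nuclear, and nuclear C*-algebras are both locally reflexive and have the LLP. These are standard facts that appear in \cite{pisier} and should be invoked without proof.

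For $\mathrm{(ii)}\Rightarrow\mathrm{(i)}$, assume $\mathrm{C}^*(G)$ is locally reflexive. Apply the first Effros--Haagerup theorem with $A=\mathrm{C}^*(G)$, $J$ the kernel of the canonical quotient onto $\mathrm{C}^*_r(G)$, and $C=\mathrm{C}^*(G)$. This yields precisely the exactness of Sequence \ref{seq}, so Proposition \ref{Prop:seq} (whose hypotheses are inner amenability and the factorization property, both assumed) forces $G$ to be amenable. Analogously, for $\mathrm{(iii)}\Rightarrow\mathrm{(i)}$, assume $\mathrm{C}^*_r(G)$ has the LLP. Apply the second Effros--Haagerup theorem with $B=\mathrm{C}^*(G)$, $J$ the kernel of the regular representation (so $B/J=\mathrm{C}^*_r(G)$ has the LLP by assumption), and $C=\mathrm{C}^*(G)$, to again conclude that Sequence \ref{seq} is exact and apply Proposition \ref{Prop:seq}.

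There is essentially no obstacle in this argument, since Proposition \ref{Prop:seq} has already done the substantive work of tying exactness of Sequence \ref{seq} to amenability of $G$ under the inner amenability and factorization property assumptions. The only nontrivial inputs are the two Effros--Haagerup exactness theorems, which are already stated in the excerpt, and the well-known implications that nuclear C*-algebras are locally reflexive and have the LLP. The proof should therefore be very short.
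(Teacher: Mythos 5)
Your proof is correct and is exactly the argument the paper intends: the corollary is stated immediately after the two Effros--Haagerup theorems precisely so that (ii)$\Rightarrow$(i) and (iii)$\Rightarrow$(i) follow by feeding the resulting exactness of Sequence \ref{seq} into Proposition \ref{Prop:seq}, while (i)$\Rightarrow$(ii),(iii) is nuclearity of $\mr C^*(G)=\mr C^*_r(G)$. The only (trivial) point left implicit is that the Effros--Haagerup sequences have the ideal in the left tensor factor whereas Sequence \ref{seq} has it in the right, which is handled by the flip isomorphism of the minimal tensor product.
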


\subsection*{Related properties}

We finish off this section by briefly considering two properties related to the factorization property. Namely, when the representation $\lambda\cdot\rho$ extends to a $*$-representation of
\begin{itemize}
	\item[(a)] $\mr C^*_r(G)\otimes_{\min}\mr C^*_r(G)$, or
	\item[(b)] $\mr{VN}(G)\otimes_{\min}\mr{VN}(G)$.
\end{itemize}
For convenience, we will say that groups $G$ possessing the first of these two properties have {\it Property (F2)} and those possessing the latter of the two properties have {\it Property (F3)}. Then
$$ \mr{Property\,(F3)}\Rightarrow\mr{Property\,(F2)}\Rightarrow\mr{Property\,(F)}.$$
Indeed, the first implication follows from the fact that $\mr C^*_r(G)\otimes_{\min}\mr C^*_r(G)$ is a C*-subalgebra of $\mr{VN}(G)\otimes_{\min}\mr{VN}(G)$, and the second implication because $\mr C^*_r(G)\otimes_{\min}\mr C^*_r(G)$ is a canonical quotient of $\mr C^*(G)\otimes_{\min}\mr C^*(G)$.

The following proposition relates properties (F2) and (F3) to other well known properties.

\begin{prop}
	Let $G$ be a locally compact group.
	\begin{enumerate}
		\item[(a)] $G$ has property (F3) if and only if $\mr{VN}(G)$ is injective. In particular, $G$ has property (F3) if $G$ is amenable.
		\item[(b)] If $G$ is inner amenable, then $G$ has property (F3) if and only if $G$ is amenable.
		\item[(c)] If $G$ is inner amenable, then $G$ has property (F2) if and only if $G$ is amenable.
	\end{enumerate}
\end{prop}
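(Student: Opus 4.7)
The plan for part (a) is to invoke the classical characterization of injective von Neumann algebras in standard form: for a von Neumann algebra $M \subseteq \B(\Hi)$ in standard form, $M$ is injective if and only if the multiplication map $M \otimes_{\min} M' \to \B(\Hi)$ extends to a $*$-homomorphism. Since $\mr{VN}(G)$ acts in standard form on $\mr L^2(G)$ with commutant $\mr{VN}(G)' = \rho(G)''$, and there is a canonical normal $*$-isomorphism $\mr{VN}(G) \to \rho(G)''$ extending $\rho\fn \mr C^*(G) \to \B(\mr L^2(G))$, the multiplication map $\mr{VN}(G) \otimes_{\min} \mr{VN}(G)' \to \B(\mr L^2(G))$ translates into the map $a \otimes b \mapsto \lambda(a)\rho(b)$, which is exactly property (F3). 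For the ``in particular'' clause, amenability of $G$ yields an invariant mean on $\mr L^\infty(G)$ which produces a conditional expectation $\B(\mr L^2(G)) \to \mr{VN}(G)$, witnessing injectivity of $\mr{VN}(G)$.

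For parts (b) and (c), since property (F3) implies property (F2) (as noted in the paper), it suffices to prove the forward direction of (c): if $G$ is inner amenable and has property (F2), then $G$ is amenable. The converses are then easy: if $G$ is amenable, then $\mr C^*(G) = \mr C^*_r(G)$ is nuclear, so (F2) holds trivially, and part (a) gives (F3) since $\mr{VN}(G)$ is injective whenever $G$ is amenable.

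To establish the forward direction I will mimic the proof of Proposition \ref{Prop:seq}. Property (F2) gives that $\lambda \cdot \rho$ factors through $\mr C^*_r(G) \otimes_{\min} \mr C^*_r(G)$, so viewed as representations of $G \times G$ we have $\lambda \cdot \rho \prec \lambda \otimes \rho$. Restricting to the diagonal subgroup $\Delta \subseteq G \times G$ yields $\tau_2 = (\lambda \cdot \rho)|_\Delta \prec (\lambda \otimes \rho)|_\Delta$, and Fell's absorption principle identifies the latter with an amplification of $\lambda$. Hence $\tau_2 \prec \lambda$. Inner amenability of $G$ forces $\tau_2$ to be amenable, and the weak-containment argument used in the proof of Proposition \ref{Prop:seq} then yields amenability of $\lambda$, whence $G$ is amenable.

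The principal technical subtlety lies in part (a): one must carefully verify that the canonical identification of $\rho(G)''$ with $\mr{VN}(G)$ converts the commutant multiplication map into $a \otimes b \mapsto \lambda(a)\rho(b)$. This relies on quasi-equivalence of $\lambda$ and $\rho$, which is unitary equivalence in the unimodular case and requires the modular structure in general. Once this identification is in place, the injectivity characterization applies directly, and parts (b) and (c) then reduce to the diagonal argument outlined above.
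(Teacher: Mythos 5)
Your proposal is correct and, for parts (a) and (c), follows essentially the same route as the paper: (a) is the Effros--Lance characterization of injectivity via min-continuity of the multiplication map $\mr{VN}(G)\odot\mr{VN}(G)'\to\mc B(\mr L^2(G))$ together with the canonical identification $\rho(G)''\cong\mr{VN}(G)$, and (c) is the diagonal-restriction argument ($\tau_2\prec\lambda$ plus Bekka's theory of amenable representations) already used in Proposition \ref{Prop:seq}. The one genuine divergence is part (b): the paper deduces it directly from (a) and the Lau--Paterson theorem ($G$ is amenable if and only if $G$ is inner amenable and $\mr{VN}(G)$ is injective), whereas you route it through the implication $\tn{(F3)}\Rightarrow\tn{(F2)}$ and part (c). Your route is perfectly valid and has the small advantage of not needing the Lau--Paterson citation, at the cost of making (b) depend on (c) rather than standing on its own; both arguments ultimately rest on the same amenable-representation machinery, so nothing of substance is gained or lost. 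One cosmetic point: in (c) you write the containing representation as $\lambda\otimes\rho$ where the paper writes $\lambda\times\lambda$; since $\rho$ and $\lambda$ are weakly equivalent and Fell absorption applies in either form, both give $\tau_2\prec\lambda$, so this is immaterial.
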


\begin{proof}
	(a) Let $\mr{VN}_\rho(G)=\rho(G)''\subset \mc B(\mr L^2(G))$ denote the von Neumann algebra associated to the right regular representation of $G$. Then $\mr{VN}_\rho(G)=\mr{VN}(G)'$. So $\mr{VN}(G)$ is injective if and only if the multiplication map $\mr{VN}(G)\odot \mr{VN}_\rho(G)\to \mc B(\mr L^2(G))$ is continuous with respect to the minimal C*-norm (see \cite[Lemma 2.1]{effl}). Since $\mr{VN}_\rho(G)\cong \mr{VN}(G)$ canonically, we conclude that $\mr{VN}(G)$ is injective if and only if $G$ has property (F3).
	
	(b) This follows immediately from part (a) and a result of Lau and Paterson which states that a locally compact group $G$ is amenable if and only if $G$ is inner amenable and $\mr{VN}(G)$ is injective (see \cite[Corollary 3.2]{laup}).
	
	(c) Suppose $G$ is an inner amenable group with property (F2). Then $\lambda\cdot \rho$ is weakly contained in the $\lambda\times\lambda$. Restricting to the diagonal subgroup of $G\times G$, we get that $\tau_2$ is weakly contained in $\lambda$ by a similar argument as used in the proof of Proposition \ref{Prop:seq}. Since $\tau_2$ is an amenable representation, we deduce that $\lambda$ is an amenable representation and, hence, that $G$ is amenable.
\end{proof}

\begin{corollary}
	Suppose that $G$ is a locally compact group such that $\mr{VN}(G)$ is injective. Then $G$ has the factorization property.
\end{corollary}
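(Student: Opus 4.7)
The plan is to deduce this corollary directly from the previous proposition together with the chain of implications $\mathrm{Property\,(F3)}\Rightarrow \mathrm{Property\,(F2)}\Rightarrow\mathrm{Property\,(F)}$ established immediately before it. There is essentially no new content to prove, only two implications to compose.

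First, I would invoke part (a) of the preceding proposition, which asserts that $G$ has property (F3) if and only if $\mathrm{VN}(G)$ is injective. Since we are given that $\mathrm{VN}(G)$ is injective, we immediately obtain that $G$ has property (F3), i.e., the representation $\lambda\cdot\rho$ of $G\times G$ extends to a $*$-representation of $\mathrm{VN}(G)\otimes_{\min}\mathrm{VN}(G)$.

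Next, I would appeal to the chain of implications recorded before the proposition: property (F3) implies the factorization property, since $\mathrm{C}^*(G)\otimes_{\min}\mathrm{C}^*(G)$ maps canonically onto $\mathrm{C}^*_r(G)\otimes_{\min}\mathrm{C}^*_r(G)$, which in turn sits as a C*-subalgebra of $\mathrm{VN}(G)\otimes_{\min}\mathrm{VN}(G)$. Composing this quotient/inclusion pair with the extension given by (F3) yields a $*$-representation of $\mathrm{C}^*(G)\otimes_{\min}\mathrm{C}^*(G)$ whose composition with $a\otimes b\mapsto a\otimes b$ followed by the canonical maps sends $a\otimes b$ to $\lambda(a)\rho(b)$, which is exactly the definition of the factorization property.

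There is no real obstacle here; the corollary is a one-line consequence of the preceding proposition. The only thing worth being careful about is making sure that the canonical quotient and inclusion referenced above are \emph{compatible} with the representation $\lambda\cdot\rho$, but this is immediate since $\lambda(G)\subset \mathrm{C}^*_r(G)\subset\mathrm{VN}(G)$ and similarly $\rho(G)\subset\mathrm{VN}_\rho(G)$, so the various extensions all agree on the image of $G\times G$ and hence on $\mathrm{C}^*(G)\odot \mathrm{C}^*(G)$.
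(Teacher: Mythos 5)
Your proposal is correct and is exactly the argument the paper intends: the corollary follows by combining part (a) of the preceding proposition (injectivity of $\mr{VN}(G)$ is equivalent to property (F3)) with the already-established chain of implications $\mathrm{Property\,(F3)}\Rightarrow\mathrm{Property\,(F2)}\Rightarrow\mathrm{Property\,(F)}$. The paper omits the proof precisely because it is this immediate composition, and your extra remark about compatibility of the canonical quotient and inclusion with $\lambda\cdot\rho$ is a reasonable (if unnecessary) precaution.
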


The above proposition shows that every separable almost connected group has property (F3) by deep work of Connes (see \cite[Corollary 6.9]{connes}), and that properties (F2) and properties (F3) differ from property (F) since every residually finite discrete group has the factorization property. It is not known to the author whether properties (F2) and (F3) coincide.

%
%

%

\section{Hereditary properties}\label{Sec:subgroup}

Let $G$ and $H$ be locally compact groups. We say that $H$ {\it continuously embeds} into $G$ if there exists a continuous injective group homomorphism $\iota\fn H\to G$.
The main purpose of this section is to address the problem of when the factorization property passes to continuous embeddings. The first and motivating attempt at this problem was made by Kirchberg in \cite[Corollary 7.3 (iii)]{kirch1} which states that if $\mr C^*(G)$ is nuclear and $H$ is discrete, then $H$ also has the factorization property. Unfortunately, the proof of this result contains an error and a counterexample to the result was produced by Thom in \cite{thom}. Thom was, however, able to recover Kirchberg's result when $G$ is assumed to be a unimodular almost connected group which admits a neighbourhood base $\{E_\alpha\}_\alpha$ for the identity such that
$$ \frac{m_G(sE_\alpha\Delta E_\alpha s)}{m_G(E_\alpha)}\to 0$$
for every $s\in G$ (see \cite[Remark 3.1]{thom}). The class of locally compact groups $G$ which admit such a neighbourhood base are known in the literature as almost-SIN groups and coincides with the class of unimodular QSIN groups (see \cite{stokke}). Since an almost connected group is inner amenable if and only if it is amenable, Thom's result equivalently states that if $G$ is a unimodular almost connected amenable group and $H$ is a discrete group which embeds into $G$, then $H$ has the factorization property. This result was generalized by Ruan and the author in \cite[Theorem 3.2]{ruanw} where it was shown that if $G$ is amenable and $H$ is discrete, then $H$ has the factorization property. The main result of this section (Corollary \ref{main 1}) states that if $G$ is QSIN and has the factorization and $H$ is a locally compact group which embeds continuously into $G$, then $H$ admits the factorization property. Since every amenable locally compact group $G$ is QSIN and admits the factorization property, this result generalizes all previously known results in the case when $H$ is discrete and extends them to the case when $H$ is locally compact.

The proof of the above mentioned result begins with the following measure theoretic lemma about continuous embeddings of locally compact groups.

\begin{lemma}\label{Lem:measure}
	Let $G$ be a locally compact group and $\{e_\alpha\}\subset \mr L^1(G)_1\cap \mr C_c(G)_+$ be a net such that $\mathrm{supp}\,e_\alpha\to \{e\}$. Further suppose that $H$ is a locally compact group, $\iota\fn H\to G$ is a continuous injective embedding of $H$ into $G$, and $K_1$ and $K_2$ are compact subsets of $H$. Then 
	$$ \int_G \left[\int_{K_2}\int_{K_1} e_\alpha(\iota(h_1^{-1})x)e_\alpha(\iota(h_2^{-1})x)\,dh_1\,dh_2\right]^{\frac{1}{2}}dx\to m_H(K_1\cap K_2).$$
\end{lemma}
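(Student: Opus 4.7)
The plan is to prove matching upper and lower bounds of $m_H(K_1\cap K_2)$ for the displayed integral $I_\alpha$. By Fubini, the inner double integral factors as a product: setting
$$G^K_\alpha(x):=\int_K e_\alpha(\iota(h^{-1})x)\,dh$$
for compact $K\subseteq H$, the integrand becomes $\sqrt{G^{K_1}_\alpha(x)\,G^{K_2}_\alpha(x)}$. Left invariance of $m_G$ together with $\|e_\alpha\|_1=1$ yields, by Fubini, the identity $\int_G G^K_\alpha\,dx = m_H(K)$. Setting $K_0:=K_1\cap K_2$, the inclusion $K_0\subseteq K_i$ forces $G^{K_i}_\alpha\geq G^{K_0}_\alpha\geq 0$, hence $\sqrt{G^{K_1}_\alpha G^{K_2}_\alpha}\geq G^{K_0}_\alpha$, and integrating gives the lower bound $I_\alpha\geq m_H(K_0)$.

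For the matching upper bound, fix $\epsilon>0$. By outer regularity of $m_H$, choose a relatively compact open set $V\subseteq H$ with $K_0\subseteq V$ and $m_H(V)\leq m_H(K_0)+\epsilon$, and decompose $K_i=A_i\sqcup(K_i\cap V)$ where $A_i:=K_i\setminus V$ is compact. The key observation is that each of the intersections $A_1\cap A_2$, $A_1\cap(K_2\cap\bar V)$, and $A_2\cap(K_1\cap\bar V)$ is contained in $K_0\setminus V=\emptyset$. Since $\iota$ is injective, the corresponding pairs of compact images in $G$ are disjoint, so local compactness of $G$ yields an open neighbourhood $V_0\ni e$ small enough that $\iota(X)V_0\cap\iota(Y)V_0=\emptyset$ for each of the three relevant pairs $(X,Y)$.

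Once $\alpha$ is large enough that $\supp(e_\alpha)\subseteq V_0$, the support of each $G^K_\alpha$ is contained in $\iota(K)V_0$, so the three ``cross'' products $G^{A_1}_\alpha G^{A_2}_\alpha$, $G^{A_1}_\alpha G^{K_2\cap V}_\alpha$, and $G^{A_2}_\alpha G^{K_1\cap V}_\alpha$ all vanish identically on $G$. Expanding
$$G^{K_1}_\alpha G^{K_2}_\alpha=(G^{A_1}_\alpha+G^{K_1\cap V}_\alpha)(G^{A_2}_\alpha+G^{K_2\cap V}_\alpha)$$
therefore collapses to the single term $G^{K_1\cap V}_\alpha G^{K_2\cap V}_\alpha$, and Cauchy--Schwarz in $L^2(G)$ together with $\int_G G^K_\alpha\,dx=m_H(K)$ gives
$$I_\alpha=\int_G\sqrt{G^{K_1\cap V}_\alpha(x)\,G^{K_2\cap V}_\alpha(x)}\,dx\leq\sqrt{m_H(K_1\cap V)\,m_H(K_2\cap V)}\leq m_H(V)\leq m_H(K_0)+\epsilon.$$
Letting $\epsilon\to 0$ yields $\limsup_\alpha I_\alpha\leq m_H(K_0)$, completing the proof.

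The main subtlety is choosing the right decomposition of the $K_i$. A direct split $K_i=K_0\sqcup(K_i\setminus K_0)$ fails because $K_i\setminus K_0$ is not closed and its closure can meet $K_0$, blocking the clean separation argument in $G$. Slightly enlarging $K_0$ to an open set $V$ whose $m_H$-measure exceeds $m_H(K_0)$ by at most $\epsilon$ produces genuinely compact, disjoint pieces $A_1,A_2$ whose $\iota$-images admit a common separating neighbourhood of the identity in $G$; the price paid is the additive $\epsilon$ slack, which is harmless.
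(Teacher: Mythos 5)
Your proof is correct. The core mechanism is the same as the paper's: the double integral factors as $G^{K_1}_\alpha(x)G^{K_2}_\alpha(x)$, the identity $\int_G G^K_\alpha\,dx=m_H(K)$ follows from left invariance, and the heart of the matter is that for disjoint compact pieces of $H$ the injectivity of $\iota$ together with $\supp e_\alpha\to\{e\}$ forces the corresponding cross products to vanish identically once $\alpha$ is large. Where you diverge is in how you handle the non-closedness of $K_1\setminus K_2$ and in how you close the estimate. The paper approximates from the \emph{inside}: it shrinks $K_1\setminus K_2$ and $K_2\setminus K_1$ to compact subsets $\widetilde K_1,\widetilde K_2$ of almost full measure, computes the limit exactly for the modified sets $K_i'=(K_1\cap K_2)\cup\widetilde K_i$, and then controls the discrepancy via the pointwise inequality $(\sqrt a-\sqrt b)^2\le|a-b|$ and H\"older. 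You instead approximate from the \emph{outside}, enlarging $K_1\cap K_2$ to a relatively compact open $V$ of nearly equal measure, which makes $A_i=K_i\setminus V$ genuinely compact and disjoint from $K_{3-i}$, and then you squeeze: the lower bound $I_\alpha\ge m_H(K_1\cap K_2)$ is free from monotonicity (and holds for every $\alpha$), while the upper bound comes from Cauchy--Schwarz after the cross terms collapse. Your route avoids the paper's somewhat delicate three-term error estimate at the cost of invoking outer regularity of Haar measure; both are complete and of comparable length, and your two-sided-bound formulation is arguably the cleaner bookkeeping.
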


\begin{proof}
	Fix $\epsilon>0$ and choose a compact sets $\widetilde{K}_1\subset K_1\backslash K_2$ and $\widetilde{K}_2\subset K_2\backslash K_1$ so that $m_H([K_1\backslash K_2]\backslash \widetilde{K_1})< \frac{\epsilon^2}{4m_H(K_2)}$ and $m_H([K_2\backslash K_1]\backslash \widetilde{K_2})< \frac{\epsilon^2}{4m_H(K_1)}$. To simplify notation, we will let $K_1'=(K_1\cap K_2)\cup \widetilde K_1$ and $K_2'=(K_1\cap K_2)\cup \widetilde K_2$. Since $\iota(K_2^{-1}\widetilde K_1)$ and $\iota(K_1^{-1}\widetilde{K_2})$ are compact (and, hence, closed) subsets of $G$ which do not contain the identity, we can find an open neighbourhood $U$ of the identity in $G$ so that $UU^{-1}\cap \iota(K_2^{-1}\widetilde K_1)=UU^{-1}\cap\iota(K_1^{-1}\widetilde{K}_2)=\emptyset$. Let $\alpha_0$ be chosen large enough so that $\mathrm{supp}\, e_\alpha\subset U$ for all $\alpha\geq \alpha_0$. Then 
	$$\chi_{\widetilde K_1}(h_1)\chi_{K_2}(h_2)e_\alpha(\iota(h_1^{-1})x)e_\alpha(\iota(h_2^{-1})x)= 0$$
	and
	$$\chi_{K_1}(h_1)\chi_{\widetilde K_2}(h_2)e_\alpha(\iota(h_1^{-1})x)e_\alpha(\iota(h_2^{-1})x)= 0$$
	for $\alpha\geq \alpha_0$, $x\in G$, and $h_1,h_2\in H$ since if $\iota(h_1^{-1})x$ and $\iota(h_2^{-1})x$ both belong to $\mathrm{supp}\,e_\alpha$, then $\iota(h_2^{-1}h_1)\in UU^{-1}$. In particular,
	\begin{eqnarray*}
		&&\int_{G}\left[\int_{K_2'}\int_{K_1'} e_\alpha(\iota(h_1^{-1})x)e_\alpha(\iota(h_2^{-1})x)\,dh_1\,dh_2\right]^{\frac{1}{2}}\,dx\\
		&=&\int_{G}\left[\int_{K_1\cap K_2}\int_{K_1\cap K_2} e_\alpha(\iota(h_1^{-1})x)e_\alpha(\iota(h_2^{-1})x)\,dh_1\,dh_2\right]^{\frac{1}{2}}\,dx\\
		&=& m_H(K_1\cap K_2)
	\end{eqnarray*}
	when $\alpha\geq \alpha_0$. So
	\begin{eqnarray*}
		&& \left|\int_G \left[\int_{K_2}\int_{K_1} e_\alpha(\iota(h_1^{-1})x)e_\alpha(\iota(h_2^{-1})x)\,dh_1\,dh_2\right]^{\frac{1}{2}}dx- m_H(K_1\cap K_2)\right|\\
		&\leq& \left|\int_G\left(\left[\int_{K_1} e_\alpha(\iota(h_1^{-1})x)\,dh_1\right]^{\frac{1}{2}}-\left[\int_{K_1'} e_\alpha(\iota(h_1^{-1})x)\,dh_1\right]^{\frac{1}{2}}\right)\left[\int_{K_2} e_\alpha(\iota(h_1^{-1})x)\,dh_2\right]^{\frac{1}{2}}dx\right|\\
		&&+\left|\int_G\left[\int_{K_1'} e_\alpha(\iota(h_1^{-1})x)\,dh_2\right]^{\frac{1}{2}}\left(\left[\int_{K_2} e_\alpha(\iota(h_1^{-1})x)\,dh_1\right]^{\frac{1}{2}}-\left[\int_{K_2'} e_\alpha(\iota(h_1^{-1})x)\,dh_1\right]^{\frac{1}{2}}\right)dx\right|.
	\end{eqnarray*}
	for $\alpha\geq \alpha_0$. Observe that
	\begin{eqnarray*}
		&&\int_G\left(\left[\int_{K_i} e_\alpha(\iota(h^{-1})x)\,dh\right]^{\frac{1}{2}}-\left[\int_{K_i'} e_\alpha(\iota(h^{-1})x)\,dh\right]^{\frac{1}{2}}\right)^2 dx\\
		&\leq& \int_G \left(\int_{K_i} e_\alpha(\iota(h^{-1})x)\,dh-\int_{K_i'} e_\alpha(\iota(h^{-1})x)\,dh\right) dx\\
		&=& m_H(K_i\backslash K_i')\\
		&< & \frac{\epsilon^2}{4m_H(K_{j})}
	\end{eqnarray*}
	for $i=1,2$, where
	$$ j=\left\{\begin{array}{cl}
	1,&\tn{if }i=2\\
	2,&\tn{if }i=1.
	\end{array}\right.$$
	Thus, H\"older's inequality implies that
	$$\left|\int_G \left[\int_{K_2}\int_{K_1} e_\alpha(\iota(h_1^{-1})x)e_\alpha(\iota(h_2^{-1})x)\,dh_1\,dh_2\right]^{\frac{1}{2}}dx- m_H(K_1\cap K_2)\right|< \epsilon$$
	for $\alpha\geq \alpha_0$.
\end{proof}


\begin{theorem}
	Let $H$ be a locally compact group. Suppose that $\iota\fn H\to G$ is a continuous embedding of $H$ into a QSIN group $G$. Then $\lambda_H\cdot\rho_H$ is weakly contained in $(\lambda_G\cdot \rho_G)\circ (\iota\times\iota)$.
\end{theorem}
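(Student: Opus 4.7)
The plan is to approximate each diagonal matrix coefficient of $\lambda_H\cdot\rho_H$ by diagonal matrix coefficients of $(\lambda_G\cdot\rho_G)\circ(\iota\times\iota)$, uniformly on compact subsets of $H\times H$. Since $\{\chi_K : K\subset H \text{ compact}\}$ is total in $\mr L^2(H)$, this (together with polarization) is enough for weak containment. Using that $G$ is QSIN, I fix a bounded approximate identity $\{e_\alpha\}\subset \mr L^1(G)_1\cap \mr C_c(G)_+$ that is quasi-invariant under $\tau_1$ and satisfies $\mathrm{supp}(e_\alpha)\to\{e\}$, and define
$$\xi_{K,\alpha}(x) := \left[\int_K e_\alpha(\iota(h^{-1})x)\,dh\right]^{1/2}.$$
A Fubini computation gives $\|\xi_{K,\alpha}\|_2^2 = m_H(K)$, and Lemma \ref{Lem:measure} yields $\langle \xi_{K_1,\alpha}, \xi_{K_2,\alpha}\rangle \to m_H(K_1\cap K_2)$ for any compact $K_1,K_2\subset H$.

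The action of $\lambda_G(\iota(h_1))$ on $\xi_{K,\alpha}$ is straightforward: substituting $h'=h_1h$ in the defining integral shows $\xi_{K,\alpha}(\iota(h_1)^{-1}\cdot)=\xi_{h_1K,\alpha}$ exactly. The action of $\rho_G(\iota(h_2))$ is the delicate point because $\xi_{K,\alpha}$ is built by left-translating $e_\alpha$; this is where QSIN enters. Writing
$$\iota(h^{-1})x\iota(h_2) = \iota(h_2)^{-1}\bigl[\iota(h_2h^{-1})x\bigr]\iota(h_2)$$
as a conjugate and applying quasi-invariance of $\{e_\alpha\}$ under $\tau_1(\iota(h_2))$ yields
$$e_\alpha(\iota(h^{-1})x\iota(h_2)) \;\approx\; \Delta_G(\iota(h_2))^{-1}\,e_\alpha(\iota(h_2h^{-1})x) \qquad \text{in }\mr L^1(dx),$$
uniformly for $h_2$ in a compact set. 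Integrating over $h\in K$ and then substituting $h'=hh_2^{-1}$ (which contributes a factor $\Delta_H(h_2)$) gives $\xi_{K,\alpha}(\cdot\,\iota(h_2))^2 \approx \Delta_G(\iota(h_2))^{-1}\Delta_H(h_2)\,\xi_{Kh_2^{-1},\alpha}^2$ in $\mr L^1$; this upgrades to $\mr L^2$-closeness of the square roots via the elementary inequality $|a^{1/2}-b^{1/2}|^2 \leq |a-b|$ for $a,b\geq 0$.

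Combining these two displacements, the $\Delta_G(\iota(h_2))^{1/2}$ arising from the definition of $\rho_G(\iota(h_2))$ exactly cancels the $\Delta_G(\iota(h_2))^{-1/2}$ produced by the QSIN comparison, leaving the correct factor $\Delta_H(h_2)^{1/2}$. A final application of Lemma \ref{Lem:measure} with $K_1=h_1Kh_2^{-1}$ and $K_2=K$ then yields
$$\langle \lambda_G(\iota(h_1))\rho_G(\iota(h_2))\xi_{K,\alpha},\xi_{K,\alpha}\rangle \;\to\; \Delta_H(h_2)^{1/2}\,m_H(h_1Kh_2^{-1}\cap K) = \langle \lambda_H(h_1)\rho_H(h_2)\chi_K,\chi_K\rangle.$$
The main technical obstacle I anticipate is upgrading this pointwise convergence in $(h_1,h_2)$ to convergence uniform on compact subsets of $H\times H$. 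Uniformity in $h_2$ is built into the QSIN condition, but one also needs a version of Lemma \ref{Lem:measure} that is uniform as the pair of compact sets varies over all translates contained in a single compact envelope $C_1 K C_2^{-1}$; I would obtain this by revisiting that lemma's proof and observing that the choice of $\alpha_0$ there depends only on an ambient compact set containing the input sets, so that a single $\alpha_0$ works for an entire compact family of inputs.
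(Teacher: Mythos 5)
Your construction is essentially the paper's: the vectors $\xi_{K,\alpha}$ are exactly the building blocks used there (the paper works with general simple functions $f=\sum_i a_i\chi_{K_i}$ and sets $f_\alpha=\sum_i a_i\,\xi_{K_i,\alpha}$, which is also what is needed to make your ``totality plus polarization'' remark rigorous: weak containment requires approximating $(\lambda_H\cdot\rho_H)_{f,f}$ for $f$ in a dense \emph{subspace}, and the cross terms $\langle\xi_{K_i,\alpha},\xi_{K_{i'},\alpha}\rangle\to m_H(K_i\cap K_{i'})$ are exactly what Lemma \ref{Lem:measure} supplies, so the extension by linearity is routine). Your pointwise computation is correct and matches the paper's: the identity $\iota(h^{-1})x\iota(h_2)=\iota(h_2)^{-1}\bigl[\iota(h_2h^{-1})x\bigr]\iota(h_2)$, the use of quasi-invariance of $\{e_\alpha\}$ in $\mr L^1$, the upgrade to $\mr L^2$ via $|a^{1/2}-b^{1/2}|^2\le|a-b|$, and the cancellation of the modular factors all appear (in slightly different packaging) in the paper's Claim and final limit computation.

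The genuine gap is the one you flag: passing from pointwise to uniform convergence on compact subsets of $H\times H$. Your proposed justification --- that the $\alpha_0$ in Lemma \ref{Lem:measure} ``depends only on an ambient compact set containing the input sets'' --- is not what that lemma's proof provides. There, $\alpha_0$ is chosen so that $\supp e_\alpha\subset U$, where $U$ must satisfy $UU^{-1}\cap\iota(K_2^{-1}\widetilde K_1)=\emptyset$; this $U$ depends on how the particular sets $\widetilde K_1\subset K_1\setminus K_2$ and $K_2$ are separated, not merely on a compact envelope, and that separation degenerates as the pair $(h_1Kh_2^{-1},K)$ varies over a compact family. More fundamentally, pointwise convergence of a net of continuous coefficient functions to a continuous limit on a compact set does not yield uniform convergence without an equicontinuity input. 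The missing ingredient is the three-epsilon argument the paper uses: cover the compact set by finitely many pieces $E_j$ with base points $(s_j,t_j)$ chosen so that $\|\lambda_H(s)\rho_H(t)f-\lambda_H(s_j)\rho_H(t_j)f\|_2$ is small on $E_j$ (continuity of translation in $\mr L^1(H)$), establish the analogous estimate for $\lambda_G(\iota(s))\rho_G(\iota(t))f_\alpha$ \emph{uniformly in $\alpha$} --- this works because, after the same conjugation/QSIN manipulation, the relevant bound reduces to $\int_H\bigl|\chi_{K_i}(s^{-1}ht)\Delta_H(t)-\chi_{K_i}(s_j^{-1}ht_j)\Delta_H(t_j)\bigr|\,dh$, which does not involve $\alpha$ --- and then invoke your pointwise limit only at the finitely many points $(s_j,t_j)$. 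With that replacement, your argument is the paper's proof.
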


\begin{proof}
	Let $K$ be a compact subset of $H\times H$ and $\epsilon>0$. Observe that simple functions of the form $f=\sum_{i=1}^m a_i\chi_{K_i}$, where $a_i\in\C$ and $K_i$ are pairwise disjoint subsets of $H$ for $i=1,\ldots,m$, compose a dense subspace of $\mr L^2(H)$. It therefore suffices to check that if $f$ is such a function, then we can find a function $\widetilde{f}\in \mr L^2(G)$ so that
	$$ \big|(\lambda_H\cdot\rho_H)_{f,f}(s,t)-(\lambda_G,\rho_G)_{\widetilde{f},\widetilde{f}}(\iota(s),\iota(t))\big|< \epsilon$$
	for all $(s,t)\in K$.
	
	Let $f=\sum_{i=1}^m a_i\chi_{K_i}$. The group algebra $\mr L^1(G)$ admits a bounded approximate identity $\{e_\alpha\}\subset \mr L^1(G)_1\cap \mr C_c(G)_+$ so that $\mathrm{supp}\,e_\alpha\to\{e\}$ and $\|\tau_1(s)e_\alpha-e_\alpha\|_1\to 0$ uniformly on compact subsets of $G$ since $G$ is QSIN (see \cite[Theorem 2.6]{stokke}). For each index $\alpha$, we define $f_\alpha\in \mr L^2(G)$ by
	$$ f_\alpha(x)=\sum_{i=1}^m a_i \left(\int_{K_i} e_\alpha(\iota(h^{-1}x))\,dh\right)^{\frac{1}{2}}. $$
	Set $M=\sum_{i=1}^m |a_i| m_H(K_i)^{\frac{1}{2}}$. Then $\|f\|_2\leq M$ and $\|f_\alpha\|\leq M$ for every index $\alpha$.
	Since $K$ is compact, we can find $(s_1,t_1),\ldots (s_n,t_n)\in K$ and $E_1,\ldots,E_n\subset K$ such that $K=\bigcup_{j=1}^n E_j$ and
	\begin{equation}\label{Eqn:a}
	\int_H\left|\chi_{K_i}(s^{-1}ht)\Delta_H(t)-\chi_{K_i}(s_j^{-1}ht_j)\Delta_H(t_j)\right|dh<\left(\frac{\epsilon}{3|a_i|mM}\right)^2
	\end{equation}
	for all $(s,t)\in E_j$, $j=1,\ldots,n$, and $i=1,\ldots,m$.
	Then
	\begin{eqnarray*}
		&&\|\lambda_H(s)\rho_H(t)f-\lambda_H(s_j)\rho_H(t_j)f\|_2\\
		&\leq&\sum_{i=1}^m |a_i| \left(\int_H\left|\chi_{K_i}(s^{-1}ht)\Delta_H(t)^{\frac{1}{2}}-\chi_{K_i}(s_j^{-1}ht_j)\Delta_H(t_j)^{\frac{1}{2}}\right|^2\right)^{\frac{1}{2}}\\
		&\leq& \sum_{i=1}^m |a_i| \left(\int_H\left|\chi_{K_i}(s^{-1}ht)\Delta_H(t)-\chi_{K_i}(s_j^{-1}ht_j)\Delta_H(t_j)\right|\right)^{\frac{1}{2}}\\
		&<& \frac{\epsilon}{3M}
	\end{eqnarray*}
	implies that
	\begin{equation}\label{Eqn:b}|(\lambda_H\cdot \rho_H)_{f,f}(s,t)-(\lambda_H\cdot \rho_H)_{f,f}(s_j,t_j)|<\frac{\epsilon}{3}\end{equation}
	for all $(s_j,t_j)\in K$ and $1\leq j\leq n$. We next prove an analogous statement for $(\lambda_G\cdot \rho_G)\circ (\iota\times\iota)$.
	
	\begin{claim}\label{Claim:a}
		There exists an index $\alpha_0$ so that
		$$ |(\lambda_G\cdot \rho_G)_{f_\alpha,f_\alpha}(\iota(s),\iota(t))-(\lambda_G\cdot \rho_G)_{f_\alpha,f_\alpha}(\iota(s_j),\iota(t_j))|<\frac{\epsilon}{3}$$
		for $(s,t)\in E_j$ and $1\leq j\leq m$ when $\alpha\geq \alpha_0$.
	\end{claim}
	
	\begin{claimproof}
		For each $y\in G$ we will let $f_{\alpha,y}\in \mr L^2(G)$ be defined by
		$$ f_{\alpha,y}(x)=\sum_{i=1}^m a_i \left(\int_{K_i} \tau_1(y)e_\alpha(\iota(h^{-1})x)\,dh\right)^{\frac{1}{2}}. $$
		Then
		\begin{eqnarray*}
			&&\|f_{\alpha,y}-f_\alpha\|_2\\
			&\leq & \sum_{i=1}^m |a_i| \left(\int_G \left|\left(\int_{K_i} \tau_1(y)e_\alpha(\iota(h^{-1}x))\,dh\right)^{\frac{1}{2}}-\left(\int_{K_i} e_\alpha(\iota(h^{-1})x)\,dh\right)^{\frac{1}{2}}\right|^2 dx\right)^{\frac{1}{2}}\\
			&\leq & \sum_{i=1}^m|a_i|\left(\int_G\int_{K_i}\left| \tau_1(y)e_\alpha(\iota(h^{-1}x))-e_\alpha(\iota(h^{-1})x)\right|\,dh\,dx\right)^{\frac{1}{2}}\\
			&=& \sum_{i=1}^m |a_i|m_H(K_i)^{\frac{1}{2}}\|\tau_1(y)e_\alpha-e_\alpha\|_1^{\frac{1}{2}}\\
			&\to & 0
		\end{eqnarray*}
		uniformly on compact subsets of $G$. Therefore, it suffices to check that
		$$ |(\lambda_G\cdot \rho_G)_{f_{\alpha,\iota(t^{-1})},f_\alpha}(\iota(s),\iota(t))-(\lambda_G\cdot \rho_G)_{f_{\alpha,\iota(t_j^{-1})},f_\alpha}(\iota(s_j),\iota(t_j))|<\frac{\epsilon}{3}$$
		for all $(s,t)\in E_j$, $1\leq j\leq n$, and every index $\alpha$.
		
		Let $(s,t)\in E_j$ for some $1\leq j\leq n$. Then		
		\begin{eqnarray*}
			&&\left\|\lambda_G(\iota(s))\rho_G(\iota(t))f_{\alpha,\iota(t^{-1})}-\lambda_G(\iota(s_j))\rho_G(\iota(t_j))f_{\alpha,\iota(t_j^{-1})}\right\|_2\\
			&\leq& \sum_{i=1}^m |a_i|\left[\int_G\left|\left(\int_{K_i}e_\alpha(\iota(sht^{-1})^{-1}x)\,dh\right)^{\frac{1}{2}}-\left(\int_{K_i}e_\alpha(\iota(s_jht_j^{-1})^{-1}x)\,dh\right)^{\frac{1}{2}}\right|^2 dx\right]^{\frac{1}{2}}\\
			&=& \sum_{i=1}^m |a_i|\left[\int_G\left|\left(\int_{H}\chi_{K_i}(s^{-1}ht)e_\alpha(\iota(h^{-1})x)\Delta_H(t)\,dh\right)^{\frac{1}{2}}\right.\right.\\
			&&\hspace{10em}\left.\left.-\left(\int_{H}\chi_{K_i}(s_j^{-1}ht_j)e_\alpha(\iota(h^{-1})x)\Delta_H(t_j),dh\right)^{\frac{1}{2}}\right|^2 dx\right]^{\frac{1}{2}}\\
			&\leq& \sum_{i=1}^m |a_i| \left[\int_G\int_{H}\left|\chi_{K_i}(sht^{-1})\Delta_H(t)-\chi_{K_i}(s_jht_j^{-1})\Delta_H(t_j)\right|e_\alpha(\iota(h^{-1})x)\,dh\,dx\right]^{\frac{1}{2}}\\
			&<& \frac{\epsilon}{3M}.
		\end{eqnarray*}
		by equation \ref{Eqn:a}. So
		$$ |(\lambda_G\cdot \rho_G)_{f_{\alpha,\iota(t^{-1})},f_\alpha}(\iota(s),\iota(t))-(\lambda_G\cdot \rho_G)_{f_{\alpha,\iota(t_j^{-1})},f_\alpha}(\iota(s_j),\iota(t_j))|<\frac{\epsilon}{3}$$
		for all $(s,t)\in E_j$, $1\leq j\leq n$, and every index $\alpha$ and, hence, we have shown the claim.
	\end{claimproof}
	
	We are now equipped to complete our proof. Combining equation \ref{Eqn:b} with claim \ref{Claim:a}, it suffices to check that there exists an index $\alpha_1$ so that
	$$ \big|(\lambda_H\cdot\rho_H)_{f,f}(s_j,t_j)-(\lambda_G,\rho_G)_{f_\alpha,f_\alpha}(\iota(s_j),\iota(t_j))\big|< \frac{\epsilon}{3}$$
	for all $1\leq j\leq n$ and $\alpha\geq \alpha_1$. Since $\|f_{\alpha,y}-f_\alpha\|_2\to 0$ for every $y\in G$, it therefore suffices to check that
	$$ (\lambda_G,\rho_G)_{f_{\alpha,\iota(t^{-1})},f_{\alpha}}(\iota(s),\iota(t))\to (\lambda_H\cdot\rho_H)_{f,f}(s,t)$$
	for each $1\leq j\leq n$ and $s,t\in H$.
	
	Let $s,t\in H$. Then
	\begin{eqnarray*}
		&&(\lambda_G,\rho_G)_{f_{\alpha,\iota(t^{-1})},f_{\alpha}}(\iota(s),\iota(t))\\
		&=&\sum_{i=1}^m\sum_{i'=1}^m a_i\overline{a_{i'}}\int_G \left(\int_{K_{i'}}\int_{K_i}e_\alpha(\iota(sh_1t^{-1})^{-1}x)e_\alpha(\iota(h_2^{-1})x)\,dh_1\,dh_2\right)^{\frac{1}{2}}dx\\
		&=&\sum_{i=1}^m\sum_{i'=1}^m a_i\overline{a_{i'}}\Delta_H(t)^{\frac{1}{2}}\int_G \left(\int_{K_{i'}}\int_{s^{-1}K_it}e_\alpha(\iota(h_1)^{-1}x)e_\alpha(\iota(h_2^{-1})x)\,dh_1\,dh_2\right)^{\frac{1}{2}}dx\\
		&\to & \sum_{i=1}^m\sum_{i'=1}^m a_i \overline{a_{i'}} \Delta_H(t)^{\frac{1}{2}}m_H(s^{-1}K_it\cap K_{i'})\\
		&=&(\lambda_H\cdot\rho_H)_{f,f}(s,t)
	\end{eqnarray*}
	by Lemma \ref{Lem:measure}. So we conclude that $\lambda_H\cdot\rho_H$ is weakly contained in $(\lambda_G\cdot \rho_G)\circ (\iota\times\iota)$.
\end{proof}

\begin{corollary}\label{main 1}
	Let $H$ be a locally compact group. If $H$ embeds continuous into a QSIN group $G$ with the factorization property, then $H$ has the factorization property.
\end{corollary}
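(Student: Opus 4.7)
The plan is to derive the corollary directly from the preceding theorem by chaining together three weak containments, using only that weak containment is preserved under restriction to subgroups and under tensoring of representations. Recall that $H$ has the factorization property precisely when $\lambda_H \cdot \rho_H$ is weakly contained in $\pi_u^H \times \pi_u^H$, where $\pi_u^H$ denotes the universal representation of $H$. So the goal is to exhibit such a weak containment.

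First I would invoke the theorem: since $\iota\fn H\to G$ continuously embeds $H$ into the QSIN group $G$, we have
$$ \lambda_H \cdot \rho_H \prec (\lambda_G\cdot\rho_G)\circ (\iota\times\iota).$$
Next, because $G$ has the factorization property, $\lambda_G\cdot\rho_G$ extends to a $*$-representation of $\mr C^*(G)\otimes_{\min}\mr C^*(G)$, which is to say $\lambda_G\cdot\rho_G\prec \pi_u^G\times\pi_u^G$ as representations of $G\times G$. Precomposing with the continuous homomorphism $\iota\times\iota\fn H\times H\to G\times G$ preserves weak containment, and hence
$$(\lambda_G\cdot\rho_G)\circ(\iota\times\iota)\prec (\pi_u^G\circ\iota)\times (\pi_u^G\circ\iota).$$
Finally, $\pi_u^G\circ\iota$ is a (continuous unitary) representation of $H$, so it is weakly contained in $\pi_u^H$, and tensoring preserves weak containment, giving
$$(\pi_u^G\circ\iota)\times(\pi_u^G\circ\iota)\prec \pi_u^H\times\pi_u^H.$$

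Chaining these three weak containments yields $\lambda_H\cdot\rho_H\prec \pi_u^H\times \pi_u^H$, which is exactly the factorization property for $H$. The only real content in this corollary is the theorem just proved; the remainder is a short formal argument, and I do not anticipate any obstacle beyond verifying that weak containment behaves as expected under the two elementary operations (precomposition by a continuous homomorphism and tensor products), both of which are standard.
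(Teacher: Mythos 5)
Your proposal is correct and is exactly the (omitted) argument the paper intends: the corollary is stated as an immediate consequence of the preceding theorem, via the chain of weak containments $\lambda_H\cdot\rho_H\prec(\lambda_G\cdot\rho_G)\circ(\iota\times\iota)\prec(\pi_u^G\circ\iota)\times(\pi_u^G\circ\iota)\prec\pi_u^H\times\pi_u^H$. The auxiliary facts you invoke (weak containment is preserved under precomposition by a continuous homomorphism and under outer tensor products) are standard and pose no obstacle.
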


Recall that a locally compact group is {\it maximally almost periodic} if points in $G$ are separated by finite dimensional representations. Equivalently, $G$ is maximally almost periodic if and only if $G$ embeds continuously inside a compact group. Since amenable groups are QSIN and possess the factorization property, we obtain the following further Corollary. This special case is pointed out since Section \ref{Sec:T} provides a converse to Corollary \ref{main 1} for a special class of groups.

\begin{corollary}\label{Cor:MAP}
	Maximally almost periodic groups have the factorization property.
\end{corollary}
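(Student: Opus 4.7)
The plan is to deduce Corollary \ref{Cor:MAP} as an essentially immediate consequence of Corollary \ref{main 1}. By the characterization of maximal almost periodicity recalled in the paragraph before the corollary, any maximally almost periodic group $H$ admits a continuous embedding $\iota\fn H\to G$ into some compact group $G$. Thus it suffices to verify that compact groups are both QSIN and possess the factorization property, and then invoke Corollary \ref{main 1}.

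Both of these are standard and were already alluded to in the paper. For QSIN, compact groups are in fact SIN (conjugation acts by continuous automorphisms of a compact group, so one has a neighbourhood base at the identity of conjugation-invariant compact sets), and every SIN group is trivially QSIN; alternatively, compact groups are amenable, and Losert--Rindler's theorem cited in Section~2 says every amenable locally compact group is QSIN. For the factorization property, compact (and more generally amenable) groups have $\mr C^*(G)$ nuclear, so $\mr C^*(G)\otimes_{\min}\mr C^*(G)=\mr C^*(G)\otimes_{\max}\mr C^*(G)$, and the latter manifestly admits a $*$-representation extending $\lambda\cdot\rho$ (since $\lambda$ and $\rho$ commute, yielding a $*$-representation of $\mr C^*(G)\odot\mr C^*(G)$ that is automatically continuous with respect to the maximal norm).

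Having these two ingredients in hand, the hypotheses of Corollary \ref{main 1} apply to the embedding $\iota\fn H\to G$, yielding the factorization property for $H$. There is really no obstacle: the content of the corollary lies entirely in the preceding Theorem and its Corollary \ref{main 1}, and this final statement simply records the useful special case obtained by combining them with the structure theorem for MAP groups.
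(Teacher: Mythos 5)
Your proposal is correct and follows exactly the route the paper intends: the paper's justification is the sentence preceding the corollary, namely that amenable (in particular compact) groups are QSIN and have the factorization property, so Corollary \ref{main 1} applies to the continuous embedding of a maximally almost periodic group into a compact group. Your filling in of the details (nuclearity of $\mr C^*(G)$ giving the factorization property, and SIN or Losert--Rindler giving QSIN) is accurate and matches the paper's argument.
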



\section{Residual properties}\label{Sec:residual}

Recall that a locally compact group $G$ is {\it residually amenable} if for every non-identity element $s\in G$, there exists a closed normal subgroup $N$ of $G$ such that $s\not\in N$ and $G/N$ is amenable. The class of residually amenable discrete groups forms one of the largest classes of examples known to have the factorization properties. The main result of this section implies, as a special case, that the same is true for all locally compact groups.

The proof that residually amenable discrete groups have the factorization is surprisingly simple (and we include it because of its elegance). Indeed, if $G$ is a residually amenable discrete group, then there exists a decreasing family of normal subgroups $\{N_\alpha\}$ of $G$ such that  $\bigcap_\alpha N_\alpha=\{e\}$ and $G/N_\alpha$ is amenable for every $\alpha$. Then the positive definite function $1_{\Delta_{G/N_\alpha}}\circ (q_\alpha\times q_\alpha)$, where $q_\alpha\fn G\to G/N_\alpha$ is the quotient map, extends to a positive linear functional of $\mr C^*(G)\otimes_{\min}\mr C^*(G)$ for every index $\alpha$ by virtue of $\mr C^*(G/N_\alpha)\otimes_{\min}\mr C^*(G/N_\alpha)$ being a quotient of $\mr C^*(G)\otimes_{\min}\mr C^*(G)$. These positive definite functions converge pointwise to $1_\Delta$ and, thus, $1_\Delta$ extends to a positive linear functional on $\mr C^*(G)\otimes_{\min}\mr C^*(G)$. So $G$ has the factorization property. The proof of this fact for locally compact groups is more difficult.

Throughout this section we will assume that $G$ is a locally compact group and $\{N_\alpha\}_{\alpha\in A}$ is a collection of closed normal subgroups of $G$ which are indexed by a directed set $A$ such that for every neighbourhood compact subset $K$ of $G$ and neighbhourhood $U$ of the identity in $G$, there exists an index $\alpha_0$ so that $N_\alpha\cap K\subset U$ for every $\alpha\geq \alpha_0$ unless otherwise stated. The main result states that if $G/N_\alpha$ has the factorization property for every $\alpha$, then $G$ has the factorization property.

We begin by showing that if $\{N_\alpha\}$ is a decreasing family of closed normal subgroups of $G$ such that $\bigcap_{\alpha}N_\alpha=\{e\}$, then $\{N_\alpha\}$ satisfies the above conditions.

\begin{prop}
	If $\{N_\alpha\}$ is a decreasing family of closed normal subgroups of a locally compact group $G$ such that $\bigcap_\alpha N_\alpha=\{e\}$, then for every neighbourhood $U$ of the identity in $G$ and compact subset $K$ of $G$, there exists an index $\alpha_0$ such that $N_\alpha\cap K\subset U$ for every $\alpha\geq \alpha_0$.
\end{prop}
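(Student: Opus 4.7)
The plan is a short compactness argument by contradiction, relying on two elementary facts: $K\setminus U$ is a closed subset of the compact set $K$ and hence compact, and each $N_\alpha$ is closed in $G$.

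Suppose the conclusion fails. Then for every index $\alpha_0$ there is some $\alpha\geq \alpha_0$ and an element $x_\alpha\in (N_\alpha\cap K)\setminus U$. Using this, one extracts a cofinal subnet and produces a net $(x_\beta)_{\beta\in B}\subset K\setminus U$ such that $x_\beta\in N_\beta$ for each $\beta$, indexed so that $\beta$ runs through a cofinal subset of $A$.

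Since $K\setminus U$ is compact, the net $(x_\beta)$ has a cluster point $x\in K\setminus U$; in particular $x\neq e$ because $U$ is a neighbourhood of the identity. The crucial use of the decreasing hypothesis is this: fix any $\alpha_0\in A$; for all $\beta\geq \alpha_0$ we have $N_\beta\subseteq N_{\alpha_0}$, so the tail $\{x_\beta\fn \beta\geq \alpha_0\}$ lies in $N_{\alpha_0}$. Because $N_{\alpha_0}$ is closed, any cluster point of the net lies in $N_{\alpha_0}$. As $\alpha_0\in A$ was arbitrary, we conclude $x\in \bigcap_\alpha N_\alpha=\{e\}$, so $x=e$, contradicting $x\neq e$.

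The only delicate point is bookkeeping with nets rather than sequences, since $G$ need not be first countable; once the subnet is set up correctly the argument is immediate. No other obstacle is anticipated.
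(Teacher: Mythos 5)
Your proof is correct and is essentially the same compactness-by-contradiction argument the paper gives: pick $x_\alpha\in(N_\alpha\cap K)\setminus U$, use compactness of $K\setminus U$ to get a cluster point, and use that each $N_{\alpha_0}$ is closed and contains a tail of the net to place the cluster point in $\bigcap_\alpha N_\alpha=\{e\}$. The only cosmetic difference is that the paper first reduces to finding a single $\alpha_0$ with $N_{\alpha_0}\cap K\subset U$ (using the decreasing hypothesis up front), which slightly simplifies the negation and avoids your cofinal-subnet bookkeeping.
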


\begin{proof}
	It suffices to show that there exists an index $\alpha_0$ so that $N_{\alpha_0}\cap K\subset U$ since $N_\alpha\subset N_{\alpha_0}$ for every $\alpha\geq \alpha_0$.
	If this were not possible, then there exists an element $x_\alpha\in K\backslash U$ for every $\alpha$. Since $K\backslash U$ is compact, a subnet of $\{x_\alpha\}$ converges to an element $x\in K\backslash U$. Then $x\in N_\alpha$ for every $\alpha$ since $N_\alpha$ is closed in $G$ and $x_{\beta}\in N_\alpha$ for $\beta\geq \alpha$. So $x\in \bigcap_\alpha N_\alpha$. This contradicts the assumption that $\bigcap_\alpha N_\alpha =\{e\}$.
\end{proof}

We now proceed into the proof of this section's main result.

\begin{lemma}
	For each $t\in G$, let $\varphi_t\fn G\to G$ denote the inner automorphism $\varphi_t(s)=tst^{-1}$. Let $K$ be a compact subset of $G$. There exists an index $\alpha_0$ so that $m_{N_\alpha}\circ (\varphi_t|_{N_\alpha})=m_{N_\alpha}$ for all $t\in K$ and $\alpha\geq \alpha_0$.
\end{lemma}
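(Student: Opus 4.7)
The plan is to associate to each pair $(t,\alpha)$ the modulus of the inner automorphism $\varphi_t$ restricted to $N_\alpha$, verify that it is a group homomorphism $G\to \mathbb R_{>0}$, and then force it to equal $1$ on $K$ once $\alpha$ is large enough. Concretely, for every $t\in G$ the restriction $\varphi_t|_{N_\alpha}$ is a topological automorphism of $N_\alpha$, so there is a unique positive scalar $c_\alpha(t)$ with $m_{N_\alpha}(\varphi_t(B))=c_\alpha(t)\,m_{N_\alpha}(B)$ for every Borel $B\subset N_\alpha$. Since $\varphi_{st}=\varphi_s\circ\varphi_t$, the map $c_\alpha\fn G\to \mathbb R_{>0}$ is a homomorphism, so the lemma reduces to producing an index $\alpha_0$ with $c_\alpha(t)=1$ for all $t\in K$ and $\alpha\geq \alpha_0$.

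To obtain this, I would fix a compact symmetric neighbourhood $V_0$ of the identity in $G$, set $\widetilde K=K\cup K^{-1}$, and let $K':=\widetilde K\, V_0\, \widetilde K$, which is compact. By the standing hypothesis on $\{N_\alpha\}$, there is an index $\alpha_0$ with $N_\alpha\cap K'\subset V_0$ for all $\alpha\geq \alpha_0$. Put $A_\alpha:=N_\alpha\cap V_0$, which is a compact neighbourhood of the identity in $N_\alpha$ (using that $N_\alpha$ carries the subspace topology from $G$). For each $t\in K$, normality of $N_\alpha$ together with $tV_0t^{-1}\subset \widetilde K V_0 \widetilde K=K'$ yields
\[\varphi_t(A_\alpha)=N_\alpha\cap tV_0 t^{-1}\subset N_\alpha\cap K'\subset V_0,\]
so $\varphi_t(A_\alpha)\subset A_\alpha$; applying the same argument with $t$ replaced by $t^{-1}\in\widetilde K$ gives $\varphi_t^{-1}(A_\alpha)\subset A_\alpha$.

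These two inclusions translate, via the defining property of $c_\alpha$, into $c_\alpha(t)\leq 1$ and $c_\alpha(t^{-1})\leq 1$; since $c_\alpha$ is a homomorphism, $c_\alpha(t)c_\alpha(t^{-1})=1$, forcing $c_\alpha(t)=1$. The only delicate point is verifying that $m_{N_\alpha}(A_\alpha)\in(0,\infty)$, but this is immediate: $V_0$ is a compact neighbourhood of $e$ in $G$, so $A_\alpha$ is a compact neighbourhood of $e$ in $N_\alpha$, and Haar measure is finite on compact sets and strictly positive on non-empty open subsets of $N_\alpha$. The remaining work is a bookkeeping exercise exploiting that $N_\alpha$ eventually sits inside any prescribed neighbourhood of $e$ along any compact subset of $G$.
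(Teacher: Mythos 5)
Your proof is correct, and it takes a genuinely different route from the paper's. The paper also begins by symmetrizing $K$ and finding $\alpha_0$ with $N_\alpha\cap KUUK\subset U$ for a symmetric pre-compact neighbourhood $U$, but it then uses this to show that $H_\alpha:=N_\alpha\cap U$ is an \emph{open compact subgroup} of $N_\alpha$ satisfying $\varphi_t(H_\alpha)=H_\alpha$ for $t\in K$; since $m_{N_\alpha}$ and $m_{N_\alpha}\circ\varphi_t$ are both Haar measures on $H_\alpha$ with the same finite total mass, they agree there, and the identity $m_{N_\alpha}(E)=m_{N_\alpha}(\varphi_t(E))$ for arbitrary measurable $E\subset N_\alpha$ is then obtained by decomposing $E$ over the cosets of $H_\alpha$. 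You instead invoke the modulus $c_\alpha(t)$ of the automorphism $\varphi_t|_{N_\alpha}$, observe it is a homomorphism $G\to\mathbb{R}_{>0}$, and squeeze it to $1$ from the two inclusions $\varphi_{t^{\pm 1}}(A_\alpha)\subset A_\alpha$ for a compact neighbourhood $A_\alpha=N_\alpha\cap V_0$ of positive finite measure. Your version buys a shorter argument: you need only that $A_\alpha$ is mapped into itself (not onto, and it need not be a subgroup), so you skip both the verification that $N_\alpha\cap U$ is closed under multiplication and the coset-decomposition step; the cost is relying on the standard uniqueness-of-Haar-measure fact that a topological automorphism scales Haar measure by a well-defined positive constant, whereas the paper's argument is self-contained at that point. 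All the delicate points in your sketch check out: $\varphi_t(N_\alpha\cap V_0)=N_\alpha\cap tV_0t^{-1}$ by normality and bijectivity of $\varphi_t$, and $m_{N_\alpha}(A_\alpha)\in(0,\infty)$ because $A_\alpha$ is a compact neighbourhood of $e$ in the subspace topology of the closed (hence locally compact) subgroup $N_\alpha$.
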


\begin{proof}
By replacing $K$ with $K\cup K^{-1}\cup\{e\}$, we may assume that $K$ contains the identity and is closed under the taking of inverses.
Let $U$ be any pre-compact open symmetric neighbhourhood of the identity in $G$. As $KUUK$ is a pre-compact subset of $G$, there exists an index $\alpha_0$ so that $N_\alpha\cap KUUK\subset U$ for every $\alpha\geq \alpha_0$. For the remainder of the proof we will assume that $\alpha\geq\alpha_0$.

Define $H_\alpha=N_\alpha\cap U$. Then $H_\alpha$ is closed under the taking of inverses since $U$ is symmetric and $H_\alpha$ is closed under multiplication since
$$ H_\alpha H_\alpha\subset N_\alpha\cap UU\subset N_\alpha\cap KUUK=H_\alpha.$$
So $H_\alpha$ is an open compact subgroup of $N_\alpha$. Further, $\varphi_t(H_\alpha)=H_\alpha$ for every $t\in K$ since
$$ tH_\alpha t^{-1}=t(N_\alpha\cap U)t^{-1}\subset N_\alpha\cap KUUK=H_\alpha$$
for all $t\in K$ and $K$ is self adjoint. So $m_{N_\alpha}|_{H_\alpha}=m_{N_\alpha}\circ \varphi_t|_{H_\alpha}$ for every $t\in K$ since $(m_{N_\alpha}\circ \varphi_t)|_{H_\alpha}$ is a left Haar measure for $H_\alpha$ and $m_{N_\alpha}\circ \varphi_t(H_\alpha)=m_{N_\alpha}(H_\alpha)<\infty$. Therefore, for every measurable subset $E\subset N_\alpha$ and $t\in K$,
\begin{eqnarray*}
&&m_{N_\alpha}(E)\\
&=&\sum_{\dot s\in N_\alpha/H_\alpha} m_{N_\alpha}(s^{-1}E\cap H_\alpha)\\
&=& \sum_{\dot s\in N_\alpha/H_\alpha}(m_{N_\alpha}\circ \varphi_t)(s^{-1}E\cap H_\alpha)\\
&=& \sum_{\dot s\in N_\alpha/H_\alpha}m_{N_\alpha}(\varphi_t(s^{-1})\varphi_t(E)\cap H_\alpha)\\
&=& \sum_{\dot s \in N_\alpha/H_\alpha} m_{N_\alpha}(s^{-1}\varphi_t(E)\cap H_\alpha)\\
&=& m_{N_\alpha}(\varphi_t(E))
\end{eqnarray*}
since $\varphi_t(s_1)H_\alpha=\varphi_t(s_2)H_\alpha$ if and only if $s_1 H_\alpha=s_2H_\alpha$ for $s_1,s_2\in N_\alpha$.
\end{proof}

\begin{lemma}
	Let $K$ be a compact subset of $G$. There exists an index $\alpha_0$ so that $\Delta_G(s)=\Delta_{G/N_\alpha}(\dot s)$ for all $s\in K$ and $\alpha\geq \alpha_0$.
\end{lemma}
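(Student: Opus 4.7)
The plan is to invoke the standard identity relating the modular functions of $G$, $G/N$, and conjugation on $N$, and then feed the preceding lemma into it. For a closed normal subgroup $N$ of $G$ and $s\in G$, let $\delta_N(s)>0$ be the modulus of the inner automorphism $\varphi_s|_N$ on $N$, i.e., $m_N(\varphi_s(E))=\delta_N(s)\,m_N(E)$ for every measurable $E\subset N$. The identity I will use is
$$\Delta_G(s)=\delta_N(s)^{-1}\,\Delta_{G/N}(\dot s),\qquad s\in G.$$

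To verify this identity I would fix $f\in \mr C_c(G)$ and compute $\int_G f(sxs^{-1})\,dx$ two ways. By left-invariance it equals $\int_G f(xs^{-1})\,dx=\Delta_G(s)\int_G f(x)\,dx$. On the other hand, using the normalization from the preliminaries, $\int_G f(x)\,dx=\int_{G/N}\!\int_N f(xn)\,dn\,d\dot x$, one substitutes $u=\varphi_s(n)$ in the inner integral (picking up a factor of $\delta_N(s)^{-1}$) and $\dot y=\dot s\dot x\dot s^{-1}$ in the outer integral (picking up $\Delta_{G/N}(\dot s)$ by the same left-invariance argument applied to $G/N$) to get $\delta_N(s)^{-1}\Delta_{G/N}(\dot s)\int_G f(x)\,dx$. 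Comparing the two expressions gives the formula. Since this identity is classical (it appears in Reiter--Stegeman and in Folland's book), I would simply cite it rather than reproduce the calculation.

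With the formula in hand, the lemma is essentially immediate. Apply the previous lemma to the compact set $K$ to produce an index $\alpha_0$ such that $m_{N_\alpha}\circ \varphi_t|_{N_\alpha}=m_{N_\alpha}$ for every $t\in K$ and $\alpha\geq \alpha_0$; equivalently, $\delta_{N_\alpha}(t)=1$ for all such $t$ and $\alpha$. Substituting into the identity above yields $\Delta_G(s)=\Delta_{G/N_\alpha}(\dot s)$ for all $s\in K$ and $\alpha\geq \alpha_0$, as desired.

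The only real difficulty is bookkeeping around conventions in the modular function formula; once that is settled, the combinatorial content is entirely contained in the preceding lemma.
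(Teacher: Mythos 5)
Your proposal is correct and follows essentially the same route as the paper: the paper's proof is exactly the inline verification of your identity $\Delta_G(s)=\delta_{N}(s)^{-1}\Delta_{G/N}(\dot s)$ in the special case where the preceding lemma makes $\delta_{N_\alpha}\equiv 1$ on $K$ (it computes $\int_G f(xt)\,dx$ through the quotient integral formula and uses $m_{N_\alpha}\circ\varphi_t=m_{N_\alpha}$ to slide $t$ past the inner integral). Citing the classical Weil-type formula and specializing is a legitimate packaging of the same argument.
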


\begin{proof}
	By taking $\alpha$ to be large enough, we may assume that $m_{N_\alpha}\circ \varphi_t=m_{N\alpha}$ for all $t\in K$. Then, for every $f\in \mr C_c(G)$ and $t\in K$,
	\begin{eqnarray*}
	&&\Delta_G(t)^{-1}\int_G f(x)\,dx\\
	&=& \int_{G}f(xt)\,dx\\
	&=& \int_{G/N_\alpha}\int_{N_\alpha} f(xht)\,dh\,d\dot x\\
	&=& \int_{G/N_\alpha}\int_{N_\alpha} f(x\varphi_{t}(h)t)\,dh\,d\dot x\\
	&=& \int_{G/N_\alpha}\int_{N_\alpha} f(xth)\,dh\,d\dot x\\
	&=& \Delta_{G/N_\alpha}(\dot t)^{-1} \int_{G/N_\alpha}\int_{N_\alpha} f(xh)\,dh\,d\dot x\\
	&=& \Delta_{G/N_\alpha}(\dot t)^{-1}\int_G f(x)\,dx.
	\end{eqnarray*}
So $\Delta_G(t)=\Delta_{G/N_\alpha}(\dot t)$ for all $t\in K$ when $\alpha$ is sufficiently large.
\end{proof}

\begin{theorem}
	 For each $\alpha$, let $\sigma_\alpha\fn G\times G\to \mr B(\mr L^2(G/N_\alpha))$ be the representation defined by
	$$\sigma_\alpha(s,t)=\lambda_{G/N_\alpha}(\dot s)\rho_{G/N_\alpha}(\dot t).$$
	Then $\sigma_\alpha$ converges to $\lambda_G\cdot\rho_G$ in the Fell topology.
\end{theorem}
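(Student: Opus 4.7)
My plan is to verify Fell convergence via uniform approximation of matrix coefficients. By density of $\mr C_c(G)$ in $\mr L^2(G)$ and $\mr L^2$-continuity (uniform on compacta) of the map $\xi \mapsto (\lambda_G \cdot \rho_G)_{\xi, \xi}$, it suffices to show: for every $f \in \mr C_c(G)$, compact $K \subset G \times G$, and $\epsilon > 0$, there exists $\alpha_0$ such that for each $\alpha \geq \alpha_0$, there is $g_\alpha \in \mr L^2(G/N_\alpha)$ with $\sup_{(s,t)\in K}|(\lambda_G \cdot \rho_G)_{f,f}(s,t) - (\sigma_\alpha)_{g_\alpha,g_\alpha}(s,t)| < \epsilon$. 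The natural candidate is
\[
g_\alpha(\dot x) := \Bigl(\int_{N_\alpha} |f(xn)|^2 \, dn\Bigr)^{1/2},
\]
which is well-defined on $G/N_\alpha$ by left-invariance of $m_{N_\alpha}$ and satisfies $\|g_\alpha\|_{\mr L^2(G/N_\alpha)} = \|f\|_{\mr L^2(G)}$.

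The two preceding lemmas, applied to a compact $K_G \subset G$ containing the projections of $K$ to each factor, give, for $\alpha$ sufficiently large, $\Delta_G(t) = \Delta_{G/N_\alpha}(\dot t)$ and $m_{N_\alpha}\circ\varphi_t = m_{N_\alpha}$ for $t \in K_G$. The substitution $n \mapsto t^{-1}nt$ then yields $g_\alpha(\dot s^{-1}\dot x \dot t)^2 = \int_{N_\alpha}|f(s^{-1}xnt)|^2 \, dn$. Writing $a_{\dot x}(n) := f(s^{-1}xnt)$ and $b_{\dot x}(n) := f(xn)$, and using $\int_G = \int_{G/N_\alpha}\int_{N_\alpha}$, both coefficients unfold:
\[
(\sigma_\alpha)_{g_\alpha, g_\alpha}(s,t) = \Delta_G(t)^{1/2}\int_{G/N_\alpha} \|a_{\dot x}\|_{\mr L^2(N_\alpha)} \|b_{\dot x}\|_{\mr L^2(N_\alpha)}\, d\dot x,
\]
\[
(\lambda_G\cdot\rho_G)_{f,f}(s,t) = \Delta_G(t)^{1/2}\int_{G/N_\alpha} \langle a_{\dot x}, b_{\dot x}\rangle_{\mr L^2(N_\alpha)}\, d\dot x.
\]
The comparison thus reduces to controlling the fiber-wise Cauchy--Schwarz defect $\|a_{\dot x}\|\|b_{\dot x}\| - \langle a_{\dot x}, b_{\dot x}\rangle$ after integration over $\dot x$.

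For $\alpha$ large, the hypothesis on $\{N_\alpha\}$ ensures that the effective supports of $a_{\dot x}$ and $b_{\dot x}$ both lie inside $H_\alpha := N_\alpha \cap U$ for any prescribed symmetric neighbourhood $U$ of the identity, and uniform continuity of $f$ together with continuity of conjugation makes both functions nearly constant on their supports inside $H_\alpha$ (equal respectively to $f(s^{-1}xt)$ and $f(x)$). A direct estimate using the identity $\|a\|^2\|b\|^2 - \langle a, b\rangle^2 = \tfrac{1}{2}\int\int (a(n)b(m) - a(m)b(n))^2\, dn\, dm$ then bounds the integrated defect uniformly on $K$; the complex-valued case follows by polarization. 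The main obstacle is securing this bound \emph{uniformly} over $(s,t) \in K$ and $\dot x$, given that the effective supports of $a_{\dot x}$ and $b_{\dot x}$ can be geometrically distinct subsets of $H_\alpha$; this is handled by invoking $N_\alpha \cap K' \subset U$ for a compact $K'$ chosen suitably large relative to $\supp(f)$ and $K_G$.
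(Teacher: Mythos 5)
There is a genuine gap, and it is fatal to the approach rather than merely a missing detail: your candidate vector $g_\alpha(\dot x)=\bigl(\int_{N_\alpha}|f(xn)|^2\,dn\bigr)^{1/2}$ discards the phase (or sign) of $f$ on each $N_\alpha$-coset. Fiberwise you are comparing $\|a_{\dot x}\|\,\|b_{\dot x}\|$, which is always $\geq |\langle a_{\dot x},b_{\dot x}\rangle|$, with the genuinely complex quantity $\langle a_{\dot x},b_{\dot x}\rangle$; when $a_{\dot x}$ and $b_{\dot x}$ are nearly constant with values $c_a,c_b$ on a common support $S$, the defect is $(|c_a\bar c_b|-c_a\bar c_b)\,m(S)$, which does not vanish unless $c_a\bar c_b\geq 0$. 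Concretely, take $G=\R$, $N_\alpha=\alpha\Z$ with $\alpha\to\infty$ (this family satisfies the standing hypothesis), and $f(x)=\sin(2\pi x)\chi_{[0,1]}(x)$. Then for $\alpha\geq 2$ one computes $(\lambda_G\cdot\rho_G)_{f,f}(1/2,0)=\int f(x-1/2)\overline{f(x)}\,dx=-1/4$ while $(\sigma_\alpha)_{g_\alpha,g_\alpha}(1/2,0)=\int|f(x-1/2)|\,|f(x)|\,dx=+1/4$, so your coefficient functions converge to the wrong limit. Lagrange's identity only controls $\|a\|\|b\|-|\langle a,b\rangle|$, not $\|a\|\|b\|-\langle a,b\rangle$, and the appeal to ``polarization'' cannot repair this: $(\sigma_\alpha)_{g_\alpha,g_\alpha}$ is a single fixed positive definite function determined by the nonnegative vector $g_\alpha$, and there is nothing left to polarize. (Restricting to $f\geq 0$ does not suffice either, since nonnegative functions are not dense in $\mr L^2(G)$, and even then the secondary difficulty you flag --- that the effective supports of $a_{\dot x}$ and $b_{\dot x}$ are \emph{different} subsets of $N_\alpha\cap U$ whose intersection could be small relative to $m(S_a)^{1/2}m(S_b)^{1/2}$ --- is not resolved by knowing both lie inside $U$.)

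The fix is to use a linear, phase-preserving averaging: the paper takes $f_\alpha(\dot x)=\int_{N_\alpha}f(xh)\,dh$ after renormalizing $m_{N_\alpha}$ so that $m_{N_\alpha}(N_\alpha\cap K^{-1}K)=1$ with $K=\supp f$. Then uniform continuity of $f$ gives $|f_\alpha(\dot x)-f(x)|<\epsilon$ directly once $N_\alpha\cap K^{-1}K\subset U$, and the coefficient difference is bounded by $2\epsilon\|f\|_1$ with no Cauchy--Schwarz defect to control. Your setup of the problem (reduction to $f\in\mr C_c(G)$, use of the two preceding lemmas on $\Delta_{G/N_\alpha}$ and conjugation-invariance of $m_{N_\alpha}$, and the Weil disintegration $\int_G=\int_{G/N_\alpha}\int_{N_\alpha}$) is sound and matches the paper; only the choice of $g_\alpha$ needs to be replaced.
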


\begin{proof}
	Let $f\in \mr C_c(G)$ be nonzero and $K:=\supp\,f$. By renormalizing the measures $m_{N_\alpha}$ if necessary, we may assume that $m_{N_\alpha}(N_\alpha\cap K^{-1}K)=1$ for every index $\alpha$. Define $f_\alpha\in \mr C_c(G/N_\alpha)$ by
	$$ f_\alpha(\dot x)=\int_{N_\alpha}f(xh)\,dh.$$
	We will show that $(\sigma_{\alpha})_{f_\alpha,f_\alpha}\to (\lambda_G\cdot \rho_G)_{f,f}$ in the topology of uniform convergence on compact subsets of $G$.
	
	Let $\epsilon>0$. Since $f$ is uniformly continuous, there exists an neighbourhood $U$ of the identity in $G$ such that $|f(x)-f(xy)|<\epsilon$ for all $x\in G$ and $y\in U$. By our assumption on $\{N_\alpha\}$, we can find an index $\alpha_0$ so that $N_\alpha\cap K^{-1}K\subset U$ for all $\alpha\geq \alpha_0$. Thus, for $x\in K$ and $\alpha\geq \alpha_0$, we have that
	$$ |f_\alpha(\dot x)-f(x)|=\left|\int_{N_\alpha\cap K^{-1}K}f(xh)\,dh-\int_{N_\alpha\cap K^{-1}K}f(x)\,dh\right|< \epsilon.$$
	
	Let $K'$ be a compact subset of $G$ and $q_\alpha\fn G\to G/N_\alpha$ be the canonical quotient map. We can find an index $\alpha_1\geq \alpha_0$ such that $N_\alpha \cap (K')^{-1}K^{-1}K'K\subset U$. Then, for $\alpha\geq \alpha_1$,
	\begin{eqnarray*}
		&&\left|(\sigma_\alpha)_{f_\alpha,f_\alpha}(s,t)-(\lambda_G\cdot\rho_G)_{f,f}(s,t)\right|\\
		&=& \left|\int_{G/N_\alpha}\left(\int_{N_\alpha} f(s^{-1}xth) \Delta_{G/N_{\alpha}}(t)^{\frac{1}{2}}\,dh\right)\overline{f_\alpha(\dot x)}\,d\dot x- \int_{G/N_\alpha}\int_{N_\alpha} f(s^{-1}xth)\Delta_{G}(t)^{\frac{1}{2}}\overline{f(xh)}\,dh\right|\\
		&=&	\left| \int_{q_\alpha(K)}\int_{N_\alpha\cap (K')^{-1}K^{-1}K'K}f(s^{-1}xth)\Delta_G(t)^{\frac{1}{2}}\left(\overline{f_\alpha(\dot x)}-\overline{f(xh)}\right)dh\,d\dot x\right|\\
		&\leq & 2\epsilon \|f\|_1
	\end{eqnarray*}
	since
	$$ |f_\alpha(\dot x)-f(xh)|\leq |f_\alpha(\dot x)-f(x)|+|f(x)-f(xh)|<2\epsilon$$
	for every $x\in K$ and $h\in U$. So we deduce that $\sigma_\alpha$ converges to $\lambda_G\cdot\rho_G$ in the Fell topology.
\end{proof}

\begin{corollary}\label{main 2}
	If $G/N_\alpha$ has the factorization property for every $\alpha$, then $G$ has the factorization property.
\end{corollary}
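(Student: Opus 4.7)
The plan is to combine the preceding theorem with the hypothesis applied to each quotient. First, for each index $\alpha$, I would use that $G/N_\alpha$ has the factorization property to extend $\sigma_\alpha=\lambda_{G/N_\alpha}\cdot\rho_{G/N_\alpha}$ to a $*$-representation of $\mr C^*(G/N_\alpha)\otimes_{\min}\mr C^*(G/N_\alpha)$. Pre-composing with the canonical surjection $\mr C^*(G)\otimes_{\min}\mr C^*(G)\to \mr C^*(G/N_\alpha)\otimes_{\min}\mr C^*(G/N_\alpha)$ induced by the quotient map $q_\alpha\fn G\to G/N_\alpha$, I would obtain the uniform norm bound
$$\|\sigma_\alpha(x)\|\leq \|x\|_{\min}$$
for every $x$ in the algebraic tensor product $\mr C^*(G)\odot \mr C^*(G)$ and every index $\alpha$.

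Next I would invoke the preceding theorem, which asserts $\sigma_\alpha\to \lambda_G\cdot \rho_G$ in the Fell topology of representations of $G\times G$. This is equivalent to saying that $\lambda_G\cdot \rho_G$ is weakly contained in $\bigoplus_\alpha \sigma_\alpha$, and at the level of the integrated forms on $\mr C^*(G\times G)\cong \mr C^*(G)\otimes_{\max}\mr C^*(G)$ translates to
$$\|(\lambda_G\cdot\rho_G)(x)\|\leq \sup_\alpha \|\sigma_\alpha(x)\|$$
for every $x\in \mr C^*(G\times G)$, and in particular for every $x$ in the algebraic tensor product. Combining with the first paragraph gives $\|(\lambda_G\cdot \rho_G)(x)\|\leq \|x\|_{\min}$ on $\mr C^*(G)\odot \mr C^*(G)$, so $\lambda_G\cdot \rho_G$ extends to a $*$-representation of $\mr C^*(G)\otimes_{\min}\mr C^*(G)$; this is exactly the factorization property for $G$.

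The only point I anticipate requiring care is verifying that the Fell convergence of $\sigma_\alpha$ to $\lambda_G\cdot\rho_G$ yields the pointwise norm inequality on all of $\mr C^*(G\times G)$ rather than merely on $\mr L^1(G\times G)$. This is standard: Fell convergence means that matrix coefficients of $\lambda_G\cdot\rho_G$ are uniform limits on compacta of matrix coefficients of the $\sigma_\alpha$, which transfers the norm bound to $\mr L^1(G\times G)$ and hence to $\mr C^*(G\times G)$ by density and the universal property of the full group C*-algebra. Thus the substantive analytic work has already been completed in the preceding theorem, and this corollary is essentially a formal consequence of it combined with the behaviour of the minimal tensor product under quotients.
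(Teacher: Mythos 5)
Your argument is correct and is exactly the deduction the paper intends: the corollary is stated without proof precisely because it follows from the preceding theorem by the standard mechanism you describe (each $\sigma_\alpha$ factors through $\mr C^*(G/N_\alpha)\otimes_{\min}\mr C^*(G/N_\alpha)$, hence is min-bounded on $\mr C^*(G)\odot\mr C^*(G)$, and Fell convergence transfers this bound to $\lambda_G\cdot\rho_G$). The only nitpick is that Fell convergence is strictly stronger than, not equivalent to, weak containment in $\bigoplus_\alpha\sigma_\alpha$; but you only use the implication in the direction you need, so the proof stands.
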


Consequently, every residually amenable group has the factorization property. In fact we can get a more general result by combining Corollary \ref{main 1} with Corollary \ref{main 2}.

We will call a locally compact group {\it residually amenably embeddable} if for every $x\in G\backslash\{e\}$, there exists a closed normal subgroup $N$ of $G$ so that $G/N$ continuously embeds inside an amenable group and $x\not\in N$. Equivalently, $G$ is residually amenably embeddable if and only if for every $x\in G\backslash\{e\}$ there exists a continuous group homomorphism $\varphi\fn G\to G'$ into an amenable group $G'$ so that $\varphi(x)\neq e$.

\begin{corollary}\label{Cor:ResAmEmb}
	If $G$ is residually amenably embeddable, then $G$ admits the factorization property.
\end{corollary}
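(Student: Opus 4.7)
The plan is to reduce to Corollary \ref{main 2} by building a directed net of closed normal subgroups of $G$ with factorization-property quotients that satisfies the standing hypothesis of this section.

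For each $x \in G \setminus \{e\}$, the assumption of residual amenable embeddability furnishes a closed normal subgroup $N_x \trianglelefteq G$ with $x \notin N_x$ together with a continuous injective group homomorphism $\iota_x \colon G/N_x \to A_x$ into an amenable locally compact group $A_x$. I would index by the directed set of finite subsets $F \subseteq G \setminus \{e\}$ ordered by inclusion, and set $N_F := \bigcap_{x \in F} N_x$. Each $N_F$ is a closed normal subgroup of $G$, the net $\{N_F\}$ is decreasing in $F$, and $\bigcap_F N_F = \{e\}$ since any $y \neq e$ is excluded already at the stage $F = \{y\}$. The first proposition of Section \ref{Sec:residual} then guarantees that for every compact $K \subseteq G$ and every neighbourhood $U$ of the identity there exists $F_0$ with $N_F \cap K \subseteq U$ for all $F \supseteq F_0$, which is precisely the standing hypothesis imposed on the family $\{N_\alpha\}$ in this section.

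Next I would verify that each quotient $G/N_F$ has the factorization property. The canonical diagonal map $G/N_F \to \prod_{x \in F} G/N_x$ sending $gN_F$ to $(gN_x)_{x \in F}$ is a continuous injective homomorphism, and composing with $\prod_{x \in F} \iota_x$ realizes $G/N_F$ as a continuously embedded subgroup of $\prod_{x \in F} A_x$. A finite product of amenable locally compact groups is amenable, so the target is amenable, hence QSIN and possesses the factorization property. Corollary \ref{main 1} then transfers the factorization property down to $G/N_F$.

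Having produced a family $\{N_F\}$ that meets the standing hypothesis and whose quotients all satisfy the factorization property, Corollary \ref{main 2} delivers the factorization property for $G$ itself. I do not expect any real obstruction here: the only mild subtlety is that the continuous injection $G/N_F \hookrightarrow \prod_{x \in F} A_x$ need not be a topological embedding onto its image, but Corollary \ref{main 1} only asks for a continuous injective homomorphism, so this causes no trouble. All of the substantive analytic content has been packaged into Corollaries \ref{main 1} and \ref{main 2}, and the present corollary is essentially a combinatorial assembly of the residual data.
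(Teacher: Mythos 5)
Your proof is correct and follows essentially the same route as the paper: both arguments close the family of admissible normal subgroups under finite intersections via the diagonal embedding into a (finite) product of amenable groups, invoke Corollary \ref{main 1} to give each quotient the factorization property, and then apply the standing hypothesis of the section together with Corollary \ref{main 2}. Your finite-subset indexing is just a more explicit version of the paper's ordering of the collection of all such normal subgroups into a decreasing family with trivial intersection.
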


\begin{proof}
	Suppose that $N_1$ and $N_2$ are closed normal subgroups of $G$ such that $G/N_1$ and $G/N_2$ are amenably embeddable, and fix continuous embeddings $\iota_1\fn G/N_1\to G_1$ and $\iota_2 \fn G/N_2\to G_2$ into amenable groups $G_1$ and $G_2$. Then the map
	$$ G\ni x\mapsto (\iota_1(xN_1),\iota_2(xN_2))\in G_1\times G_2$$
	is a continuous group homomorphism into $G_1\times G_2$ with kernel $N_1\cap N_2$. It follows that $G/(N_1\cap N_2)$ is amenably embeddable. Therefore
	$$A:=\{N\subset G: N\tn{ is a closed normal subgroup of $G$ so that $G/N$ is amenably embeddable}\}$$
	can be ordered into being a decreasing family of normal subgroups with the property that $\bigcap A=\{e\}$ and $G/N$ has the factorization property for every $N\in A$.
\end{proof}

The author is grateful to Nico Spronk for pointing out the following example which shows that the property of being residually amenably embeddable is strictly more general than that of being residually amenable.

\begin{example}
	Let $G$ be the group $\mr{SO}(3)$ endowed with the discrete topology. We recall that a compact group is simple if and only if it is topologically simple (see \cite[Theorem 9.90]{hoffm}). So $G$ is simple by virtue of $\mr{SO}(3)$ being topologically simple. In particular, $G$ is amenably embeddable but not residually amenable.
\end{example}

\section{Property (T) groups with the factorization property}\label{Sec:T}

One of the most celebrated results on groups with the factorization property is Kirchberg's characterization for discrete groups with property (T) (see \cite{kirch2}). This result states that a discrete group $G$ with property (T) admits the factorization if and only if $G$ is residually finite. In this section, we generalize Kirchberg's result to the context of SIN groups with property (T) by showing that such a group admits the factorization property if and only if $G$ is maximally periodic. This legitimately generalizes Kirchberg's result since a finitely generated maximally periodic group is necessarily residually finite by Mal'cev's theorem. The argument used below is a simple adaptation of Ozawa's proof of Kirchberg's aforementioned result (see \cite[Theorem 7.4]{ozawa}).

\begin{theorem}
	Suppose that $G$ is a SIN group with property (T). Then $G$ admits the factorization property if and only if $G$ is maximally almost periodic.
\end{theorem}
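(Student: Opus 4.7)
The easy direction, that maximally almost periodic implies the factorization property, is immediate from Corollary \ref{Cor:MAP} and needs neither property (T) nor the SIN hypothesis. For the other direction I would adapt Ozawa's argument: given a non-identity $s_0 \in G$, the goal is to produce a finite-dimensional unitary representation $\sigma$ of $G$ with $\sigma(s_0)\neq 1$. Since $G$ is SIN (hence unimodular), I choose a compact conjugation-invariant neighbourhood $V$ of the identity small enough that $s_0V\cap V=\emptyset$, and set $\xi_V=\chi_V/m(V)^{1/2}\in\mr L^2(G)$. The conjugation invariance of $V$ makes $\xi_V$ a unit vector fixed by $\tau_2=(\lambda\cdot\rho)|_\Delta$, while the disjointness yields the decisive coefficient computation
$$\lla(\lambda\cdot\rho)(s_0,e)\xi_V,\xi_V\rra=m(V\cap s_0V)/m(V)=0.$$

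By the factorization property, $\lambda\cdot\rho\prec\pi_u\otimes\pi_u$ as representations of $G\times G$, so there is a net of unit vectors $\eta_\alpha\in\Hi_u\otimes\Hi_u$ whose coefficient functions approximate those of $\xi_V$ uniformly on any prescribed compact subset of $G\times G$. In particular the $\eta_\alpha$ become asymptotically $\Delta$-invariant while their coefficients at $(s_0,e)$ tend to zero, so the quantitative form of property (T) (applied to the diagonal copy of $G$) lets me replace each $\eta_\alpha$ by a genuine $\Delta$-invariant unit vector $\eta_\alpha'$ within any prescribed distance $\epsilon$. For $\epsilon$ sufficiently small and $\alpha$ sufficiently large, this guarantees $|\lla(\pi_u\otimes\pi_u)(s_0,e)\eta_\alpha',\eta_\alpha'\rra|<1/2$.

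The final step identifies $\Delta$-invariant vectors in $\pi_u\otimes\pi_u$ with Hilbert--Schmidt intertwiners $\overline{\pi_u}\to\pi_u$ via the natural unitary $\Hi_u\otimes\Hi_u\cong\mr{HS}(\overline{\Hi_u},\Hi_u)$. Decomposing $\pi_u$ into irreducibles and invoking Schur's lemma shows that a Hilbert--Schmidt intertwiner can only have nonzero components supported on finite-dimensional irreducible summands, so $\eta_\alpha'$ in fact lies in $\Hi_u^{\mr{fd}}\otimes\Hi_u^{\mr{fd}}$, where $\Hi_u^{\mr{fd}}$ denotes the finite-dimensional part of $\Hi_u$. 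If $s_0$ were annihilated by every finite-dimensional representation of $G$, then $\pi_u(s_0)$ would act as the identity on $\Hi_u^{\mr{fd}}$, forcing the coefficient above to equal $\|\eta_\alpha'\|^2=1$ and contradicting the previous paragraph. Thus some finite-dimensional representation distinguishes $s_0$ from $e$, and $G$ is maximally almost periodic. The chief technical obstacle is to ensure the Hilbert--Schmidt intertwiner identification collapses the $\Delta$-invariants cleanly into the finite-dimensional part of $\pi_u$, and to verify that the coefficient approximation on compacta survives the exchange of $\eta_\alpha$ for $\eta_\alpha'$ supplied by property (T).
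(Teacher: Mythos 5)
Your argument is essentially the paper's (both adapt Ozawa's proof of Kirchberg's theorem): a conjugation-invariant compact neighbourhood giving a $\tau_2$-fixed unit vector whose $(s_0,e)$-coefficient vanishes, the factorization property to approximate by coefficients of the universal representation, property (T) to upgrade asymptotically $\Delta$-invariant vectors to genuinely invariant ones, and the Hilbert--Schmidt/Schur argument to force those invariant vectors into the finite-dimensional part. The one repair needed is in your final step: $\pi_u$ need not decompose as a direct sum of irreducibles for a general locally compact group, so "decomposing $\pi_u$ into irreducibles" is not literally available; replace $\pi_u\times\pi_u$ by $\theta\times\overline{\theta}$ with $\theta=\bigoplus_{\pi\in\widehat{G}}\pi$ (weakly equivalent to $\pi_u$, so the weak containment of $\lambda\cdot\rho$ persists), exactly as the paper does, and then the decomposition-plus-Schur step goes through verbatim.
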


\begin{proof}
	Let $s\in G\backslash\{e\}$. We will show that $G$ admits a finite dimensional representation $\pi$ such that $\pi(s)\neq 1$.
	
	Since $G$ is a SIN group, we can find a compact symmetric central neighbourhood $K$ of the identity $e$ in $G$ such that $s\not \in KK$. Consider the positive definite function $u$ on $G\times G$ defined by
	$$u(s,t)=\lla\lambda(s)\rho(t)\chi_K,\chi_K\rra.$$
	Then $u$ extends to a tracial state on $\mr C^*(G)\otimes_{\min} \mr C^*(G)$ because $G$ has the factorization property. Let $\theta=\bigoplus_{\pi\in \widehat{G}}\pi$. Since $\lambda\cdot\rho$ is weakly contained in $\theta\times\overline{\theta}\fn G\times G\to \mr B(\Hi_\theta\otimes \overline{\Hi}_\theta)$, there exists unit vectors $\xi_\alpha\in (\Hi_\theta\oplus \overline{\Hi}_\theta)^{\oplus\infty}$ such that
	$$ \lla (\theta\times \overline{\theta})^{\oplus\infty}(\cdot,\cdot)\xi_\alpha,\xi_\alpha\rra\to u$$
	uniformly on compact subsets of $G\times G$. So
	$$ \|(\theta\times \overline{\theta})^{\oplus\infty}(s,s)\xi_\alpha-\xi_\alpha\|\to 0 $$
	uniformly on compact subsets of $G$ since $u(s,s)=1$ for every $s\in G$. Hence, we may assume $(\theta\otimes \overline{\theta})^{\oplus\infty}(s)\xi_\alpha=(\theta\times \overline{\theta})^{\oplus\infty}(s,s)\xi_\alpha=\xi_\alpha$ for every $\alpha$ since approximately invariant vectors are close to invariant vectors by virtue of $G$ having property (T). A well known result of Schur states that if $\pi$ and $\pi'$ are irreducible representations of a locally compact group such that $\pi\otimes\overline{\pi'}$ admits a nontrivial invariant vector, then $\pi$ and $\pi'$ are unitarily equivalent and finite dimensional. It follows that $G\times G$ admits finite dimensional representations $\{\pi_\alpha\}$ with invariant unit vectors $\eta_\alpha$ such that
	$$ \lla \pi_\alpha(\cdot,\cdot)\eta_\alpha,\eta_\alpha\rra \to u$$
	uniformly on compact subsets of $G\times G$ as $n\to \infty$. In particular,
	$$\lla\pi_\alpha(s,e)\eta_\alpha,\eta_\alpha\rra\to u(s,e)=0$$
	and, hence, $\pi_\alpha(s,e)\neq 1$ when $\alpha$ is sufficiently large.	
\end{proof}

\begin{remark}
	Zsolt Tanko has shown the author that QSIN groups with property (T) are necessarily SIN groups. As such, the term ``SIN'' in the previous theorem can be replaced with ``QSIN''.
\end{remark}

\begin{remark}
	The previous theorem does not generalize to the class of all locally compact groups. Indeed, the group $\mr {SL}(3,\R)$ has property (T) and the factorization property, but $\mr{SL}(3,\R)$ is not maximally almost periodic since it admits no nontrivial finite dimensional irreducible representations.
\end{remark}

\section{Some remaining problems}

We finish off this paper by posing two problems about the factorization property.

\begin{prob}
	Does the factorization property pass to closed subgroups?
\end{prob}

This problem is a special case of the question of whether the factorization property passes to continuous embeddings. Though Thom showed that the factorization property does not pass to continuous embeddings, his example did not arise from a closed subgroup of a group with the factorization property and, so, this special case remains open. The author suspects that the answer to this problem should also be no.

\begin{prob}
	Let $G$ be a discrete group (resp. QSIN group or inner amenable group). Is $G$ necessarily residually amenably embeddable?
\end{prob}

This question asks whether the converse of Corollary \ref{Cor:ResAmEmb} is true for particular classes of locally compact groups. We note that the converse of Corollary \ref{Cor:ResAmEmb} is false in general. Indeed, if the converse were true in general then every continuous embedding of a group with the factorization property would have the factorization property by virtue of being residually amenably embeddable. Thanks to Thom's example, we know that this is not true.

\end{document}